\documentclass[12pt]{article}

\usepackage{amssymb}
\usepackage{amsmath,amsthm}
\usepackage[utf8]{inputenc}
\usepackage[dvips]{graphicx}
\usepackage{hyperref}
\usepackage{enumerate}
\usepackage{color}
\usepackage{tikz,ifthen}
\usepackage[T1]{fontenc}
\usepackage{float}

\usepackage{lineno}

\newcommand{\cp}{\textrm{cp}}

\newtheorem{remark}{Remark}

\newtheorem{theorem}[remark]{Theorem}
\newtheorem{observation}[remark]{Observation}
\newtheorem{proposition}[remark]{Proposition}
\newtheorem{corollary}[remark]{Corollary}

\newtheorem{problem}{Problem}

\hypersetup{colorlinks=true}
\hypersetup{colorlinks=true, linkcolor=blue, citecolor=blue,urlcolor=blue}
\newcommand{\rmv}[1]{}

\begin{document}

\title{On the Independence Number of the Modular Product}
\author{Tanja Dravec$^{(1,3)}$,  Iztok Peterin$^{(2,3)}$\\
%EndAName
\\
$^{(1)}${\small Faculty of Natural Sciences and Mathematics}\\
{\small University of Maribor,} {\small Koro\v{s}ka cesta 160, 2000 Maribor,
Slovenia.}\\
$^{(2)}$ {\small Faculty of Electrical Engineering and Computer Science}\\
{\small University of Maribor,} {\small Koro\v{s}ka cesta 46, 2000 Maribor,
Slovenia.} \\
$^{(3)}$ {\small Institute of Mathematics, Physics and Mechanics}\\
{\small Jadranska ulica 19, 1000 Ljubljana, Slovenia.} \\
{\small \texttt{e-mail:} \textit{tanja.dravec\@@um.si}; \textit{iztok.peterin\@@um.si}} \\
}

\maketitle

\date{}

\begin{abstract}
The \emph{modular product} $G\diamond H$ of graphs $G$ and $H$ is a graph on vertex set $V(G)\times V(H)$. Two vertices $(g,h)$ and $(g',h')$ of $G\diamond H$ are adjacent if $g=g'$ and $hh'\in E(H)$, or $gg'\in E(G)$ and $h=h'$, or $gg'\in E(G)$ and $hh'\in E(H)$, or (for $g\neq g'$ and $h\neq h'$) $gg'\notin E(G)$ and $hh'\notin E(H)$. The independence number $\alpha(G)$ of a graph $G$ is the maximum cardinality of a set of pairwise nonadjacent vertices in $G$. In this paper, we study the independence number of the modular product of graphs. We first structurally characterize all independent set of $G\diamond H$ which lead to the exact result on  $\alpha(G \diamond H)$. Special cases of this result lead to several sharp bounds and some exact results for $\alpha(G \diamond H)$. Finally, we introduce a partition graph associated with $G \diamond H$ that provides a framework for constructing independent sets of the modular product from independent sets of its substructures.
\end{abstract}

\textit{Keywords:} independent set, independence number, modular product.

\textit{AMS Subject Classification Numbers:} 05C69; 05C76.

\section{Introduction}

Taking a product of graphs provides a systematic way to construct a larger graph (the product) from smaller graphs (the factors). Conversely, decomposing a graph into its component factors often yields significant advantages. In particular, it is frequently possible to determine or estimate properties of the product graph from corresponding or related properties of the factor graphs. Moreover, such factorizations often facilitate algorithmic investigations.

The study of graph products can be divided into two main strands. The first concerns determining whether a graph $G$ can be expressed as a product at all; that is, whether there exist (smaller) graphs $G_1,\dots,G_k$ such that, for some graph product $*$, we have
\[
G = G_1 * \cdots * G_k.
\]
A closely related question is whether such a factorization is unique. These problems have been answered in the affirmative for several graph products and graph classes, and polynomial-time algorithms for such factorizations have been developed. The main focus has been on four standard graph products: the Cartesian, the strong, the direct, and the lexicographic products. For the Cartesian product, an optimal (i.e., linear-time) algorithm was presented in \cite{ImPe} after two decades of development in the area. Polynomial-time algorithms are known for factorizations with respect to the strong product \cite{FeSc} and the direct product \cite{Imri}. A different approach, based on the so-called Cartesian skeletons, was introduced in \cite{HaIm}. Factorization with respect to the lexicographic product is closely related to the graph isomorphism problem. For a detailed account of these results, we refer the reader to the comprehensive monograph on graph products \cite{HaIK}.

The second strand of inquiry studies the relationships between a product graph and its factors. This line of research has been immensely popular in the graph theory community over the past decades. A prominent example of this type is Hedetniemi's conjecture,
\[
\chi(G \times H) = \min\{\chi(G), \chi(H)\},
\]
concerning the chromatic number of the direct product; this conjecture was disproved only recently in \cite{shitov}. Another classical example is Vizing's conjecture,
\[
\gamma(G \Box H) \geq \gamma(G)\gamma(H),
\]
for the domination number of the Cartesian product; see the recent survey \cite{BDG-12} for a detailed discussion of this conjecture, which remains open. Both conjectures have stimulated a large body of deep and influential work.

Although there are $256$ different graph products whose edge sets are determined by the edge sets of their two factors, only $20$ of these products are associative; see \cite{Imri1,ImIz}. Moreover, only $10$ of the associative products are also commutative, among them the empty product and the complete product, which are regarded as trivial. The remaining products can be grouped into pairs, where each pair consists of a product $G * H$ and its complementary product
\[
G \overline{*} H \stackrel{\mathrm{def}}{=} \overline{\overline{G} * \overline{H}},
\]
with $\overline{G}$ denoting the complement of $G$. Three of these pairs correspond to the standard products mentioned above, namely the Cartesian, the direct, and the strong products, together with their complementary counterparts. (Note that the lexicographic product is not commutative.) This classification leaves a single remaining product, known as the \emph{modular product}. One might also refer to it as the ``forgotten'' associative and commutative product, since after the foundational work in \cite{Imri1} it was not studied for more than 40 years. Despite its natural definition and structural properties, the modular product has received surprisingly little attention in the literature. Beyond the foundational work on its factorization and basic properties \cite{Imri1}, only a small number of graph invariants have been investigated in this context. In particular, its connection to $L(2,1)$-labelings was studied in~\cite{shao-solis}. The domination number of the modular product were treated in~\cite{BPS-25}, and connections to metric-type parameters, such as the distance and the strong metric dimension, have also been explored~\cite{KK-2025}. Nevertheless, a systematic investigation of classical graph invariants for the modular product is still largely missing.

Among these invariants, the independence number stands out as a particularly natural and well-motivated object of study. Independence is a fundamental notion in graph theory and plays a central role in combinatorial optimization and algorithmic applications. Moreover, for several standard graph products, the independence number exhibits rich and often subtle interactions with the corresponding parameters of the factor graphs. Given the close relationship between the modular product and other associative and commutative products, it is therefore natural to ask how the independence number of the modular product relates to the independence numbers and structural properties of its factors. Addressing this question not only contributes to a deeper understanding of the modular product itself, but also complements and extends existing results on graph invariants under graph products.

In the next section, we introduce the necessary terminology and present several fundamental results that provide trivial bounds on the independence number of the modular product of two graphs.
In Section~\ref{sec:3}, we structurally characterize independent sets of modular product of two graphs using sequences of disjoint independent sets of $G$ and $H$, respectively. This characterization also implies an exact formula for $\alpha(G \diamond H)$. In Section~\ref{s:consequences} we use this structural characterization of independent sets in modular product and
derive lower bounds for the independence number of the modular product in terms of various graph parameters of its factors. In particular, we establish a bound expressed through the independence number of one factor and the 1-local independence number of the other; this bound is shown to be sharp for grid graphs.
Structural properties of independent sets in the modular product, are then employed to prove that $n(G) + n(H) - 2$ serves as an upper bound for the independence number of the modular product. The characterization of graphs achieving this bound is also presented.
Finally, in Section~\ref{sec:4}, we study a partition graph $GH$ associated with the modular product $G \diamond H$. This construction provides insight into how independent sets of the modular product can be obtained from independent sets of smaller components of the product.

%%%%%%%%%%%%%%%%%%%%%%%%%%%%%%%%%%%%%%%%%%%%%%%%

\section{Preliminaries}

Throughout this paper, we consider only simple, undirected graphs. Let $G$ be a graph. By $\overline{G}$ we denote the \emph{complement} of $G$, that is, the graph with vertex set $V(\overline{G}) = V(G)$ in which two vertices are adjacent if and only if they are not adjacent in $G$. The \emph{order} of a graph $G$, $n(G)$, is the cardinality of its vertex set and the \emph{size} of $G$ is the cardinality of $E(G)$. As usual, $K_n,P_n,C_n$ denotes a complete graph, a path, and a cycle, respectively, on $n$ vertices, while we use $N_n\cong\overline{K}_n$.  For a subset $S \subseteq V(G)$, the graph $G[S]$ denotes the subgraph of $G$ induced by $S$.

For a vertex $v$ of a graph $G$, the \emph{open neighborhood} of $v$, denoted by $N_G(v)$ (or simply $N(v)$ when the graph is clear from the context), is the set of all vertices of $G$ adjacent to $v$. The \emph{closed neighborhood} $N_G[v]$ of $v$ is defined as $N_G[v] = N_G(v) \cup \{v\}$. Moreover, the complement of the closed neighborhood of $v$ is defined by
\[
\overline{N}_G[v] = \{u \in V(G) \setminus \{v\} : uv \notin E(G)\}.
\]

The \emph{distance} between two vertices $u,v \in V(G)$ is the number of edges on a shortest $u,v$-path in $G$. For $i \geq 1$, the set of vertices at distance exactly $i$ from a vertex $v$ is denoted by $N_i(v)$.

A set $A \subseteq V(G)$ is an \emph{independent set} of $G$ if no two vertices of $A$ are adjacent in $G$. The maximum cardinality of an independent set in $G$ is called the \emph{independence number} of $G$ and is denoted by $\alpha(G)$. An independent set of cardinality $\alpha(G)$ is referred to as an $\alpha(G)$-set. 

As usual, we write $[k]$ for the set $\{1,\dots,k\}$. The \emph{chromatic number} of a graph $G$, denoted by $\chi(G)$, is the smallest integer $k$ for which there exists a mapping $f \colon V(G) \to [k]$ such that $f(u) \neq f(v)$ whenever $uv \in E(G)$. For such a mapping, let
\[
C_i = \{u \in V(G): f(u) = i\}, \quad i \in [k].
\]
Clearly, $C_1,\dots,C_k$ form a partition of $V(G)$ into independent sets.

A graph $G$ is \emph{$t$-partite} if its vertex set can be partitioned into $t$ non-empty, pairwise disjoint sets $V_1,\dots,V_t$ such that for any $i \in [t]$ the graph $G[V_i]$ is edgeless graph. A $t$-partite graph $G$ is said to be \emph{complete $t$-partite} if, for any $x,y \in V(G)$, we have $xy \in E(G)$ if and only if there exist distinct $i,j \in [t]$ such that $x \in V_i$ and $y \in V_j$. If $|V_i| = p_i$ for each $i \in [t]$, then the complete $t$-partite graph with partition $V_1,\dots,V_t$ is denoted by $K_{p_1,\dots,p_t}$. Throughout the paper, we assume without loss of generality that
\[
p_1 \geq p_2 \geq \cdots \geq p_t.
\]
For $t=2$, we obtain the complete bipartite graph $K_{p,r}$. Note that a complete $1$-partite graph is an edgeless graph, and that $K_n$ can also be viewed as the complete $n$-partite graph $K_{1,\dots,1}$.

Let $G$ and $H$ be two graphs. Different products between $G$ and $H$ have vertex set $V(G)\times V(H)$, but their edge sets depends on the product. 
By $G\diamond H$ we denote the \emph{modular product}. Two vertices $(g,h)$ and $(g',h')$ are adjacent in $G\diamond H$ if they are equal in one coordinate ($g=g'$ or $h=h'$) and are adjacent in the other coordinate ($hh'\in E(H)$ or $gg'\in E(G)$, respectively), or they are adjacent in both coordinates ($gg'\in E(G)$ and $hh'\in E(H)$), or (for $g\neq g'$ and $h\neq h'$) they are nonadjacent in both coordinates ($gg'\notin E(G)$ and $hh'\notin E(H)$), see the left side of  Figure \ref{fig1} for an example of a modular product. The edges of the first condition are called \emph{Cartesian edges} because they define the edges of the Cartesian product $G\Box H$. The edges arising from the second condition are called \emph{direct}, as they form the edges of the \emph{direct product} $G\times H$. The third condition gives \emph{co-direct} edges, since they are the edges of the direct product of complements of $G$ and $H$, that is $\overline{G}\times\overline{H}$. Hence,
$$E(G\diamond H)=E(G\Box H)\cup E(G\times H)\cup E(\overline{G}\times\overline{H}).$$
Recall that Cartesian and direct edges together define the edge set of the \emph{strong product} $G\boxtimes H$, another important graph product. For a vertex $h\in V(H)$, we call the set $G^{h}=\{(g,h)\in V(G \diamond H):g\in
V(G)\}$ a $G$-\emph{layer} of $G \diamond H$. By abuse of notation we will also consider $G^{h}$ as the corresponding induced subgraph. Clearly $G^{h}$ is isomorphic to $G$. For $g\in V(G)$, the $H$-\emph{layer} $^g\!H$ is defined as $^g\!H =\{(g,h)\in V(G \diamond H)\,:\,h\in V(H)\}$. We will again also consider $^g\!H$ as an induced subgraph and note that it is isomorphic to $H$. A mapping $p_{G}:V(G \diamond H)\rightarrow V(G)$, $p_{G}(g,h) = g$ is the \emph{projection} onto $G$ and $p_{H}:V(G \diamond H)\rightarrow V(H)$, $p_{H}(g,h) = h$ the \emph{projection} onto $H$. For a set $A$ of $G \diamond H$ and vertices $x \in V(G), y \in V(H)$, we denote by $^x\!A =\ ^x\!H \cap A$ and $A^y=G^y\cap A$.

The closed neighborhood of a vertex in the modular product, as seen immediately from the definition, is given by
\begin{equation}\label{modular_nbrs}
N_{G\diamond H}[(g,h)]=(N_G[g]\times N_H[h])\cup (\overline{N}_G[g]\times \overline{N}_H[h]).
\end{equation}

It follows also directly from the definition that $G \boxtimes H$ is a spanning subgraph of $G \diamond H$ and thus we get a trivial upper bound for the independence number of the modular product of two graphs, $\alpha(G \diamond H) \leq \alpha(G \boxtimes H)$. Note that the independence number of strong product graphs was intensively studied, so there are many known upper bounds and also exact values in the case when graphs $G$ and $H$ are from some special graph families~\cite{Hales, NR-96, SK-74, Vesel-1998}. Since any upper bound for $\alpha(G \boxtimes H)$ is also an upper bound for $\alpha(G \diamond H)$, we for example obtain $$\alpha(G \diamond H) \leq p(G)\alpha(H),$$ where $p(G)$ is the \emph{Rosenfeld number} of $G$~\cite{Hales}, or  
$$\alpha(G \diamond H) \leq \left\lfloor |V(G)|\cdot \frac{\alpha(H )}{\omega(G)} \right\rfloor,$$ holds for any vertex-transitive graph $G$, where $\omega(G)$ denotes the {\emph{ clique number}} of $G$ ~\cite{SK-74}. Moreover, since $ G \diamond K_n$ is isomorphic to $G \boxtimes K_n$, we immediately get that for any graph $G$ and any $n \in \mathbb{N}$,  
$$\alpha(G\diamond K_n) = \alpha(G).$$
Thus, in the rest of the paper we will be interested in computing the independence number of a modular product of two graphs where non of the factors is a complete graph.  

%%%%%%%%%%%%%%%%%%%%%%%%%%%%%%%%%%%%%%%%%%%%%%%%%%%%%%%%%%%%%%%%%%%%%%%%%%

\section{The structure of independent sets of modular product}\label{sec:3}

In this section we completely describe the structure of any independent set of modular product $G\diamond H$. For this let ${\cal A}=(A_1,\dots,A_t)$ and ${\cal B}=(B_1,\dots,B_t)$ be $t$-tuples of pairwise disjoint independent sets of $G$ and $H$, respectively, where $|A_i|=a_i$ and $|B_i|=b_i$ for every $i\in[t]$. Throughout the paper the notation from the previous sentence will have the same meaning if not specified otherwise. We say that ${\cal A}$ and ${\cal B}$ are \emph{friendly} if  
\begin{itemize}
\item $a_i=1$ or $b_i=1$ for every $i\in[t]$, and   
\item either $G[A_i\cup A_j]=K_{a_i,a_j}$ and $B_i\cup B_j$ is an independent set of $H$ or vice versa, $H[B_i\cup B_j]=K_{b_i,b_j}$ and $A_i\cup A_j$ is an independent set of $G$, for any different $i,j\in[t]$.
\end{itemize}
Observe that ${\cal A}=(\{g_1\},\{g_2\},\{g_3\},\{g_4\})$ and ${\cal B}=(\{h_1\},\{h_4\},\{h_2\},\{h_3,h_5\})$ are friendly sequences of independent sets of graphs $G$ and $H$ from Figure~\ref{fig1}.

Let $\cal{A}$ be a family of disjoint subsets of $V(G)$ of a graph $G$. The $G$-quotient graph obtained from $\cal{A}$ is the graph with vertex set $\cal{A}$, where two different vertices $A_i,A_j \in {\cal{A}}$ are adjacent if and only if for any $x \in A_i$ and any $y \in A_j$, $xy \in E(G)$. Directly from the definition of a friendly sequences  we obtain the following.

\begin{observation}\label{quotient}
    If ${\cal A}=(A_1,\dots,A_t)$ and ${\cal B}=(B_1,\dots,B_t)$ are friendly sequences of non-complete graphs $G$ and $H$, respectively, then the $G$-quotient graph obtained from $\cal{A}$ is isomorphic to the complement of the $H$-quotient graph obtained from $\cal{B}$.
\end{observation}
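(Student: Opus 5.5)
The natural isomorphism is $\varphi\colon A_i\mapsto B_i$ for $i\in[t]$. It is a bijection between the vertex sets of the two quotient graphs, because the sets $A_i$ are pairwise disjoint (hence distinct) and the same holds for the $B_i$. It remains to show that for distinct $i,j\in[t]$, $A_iA_j$ is an edge of the $G$-quotient graph if and only if $B_iB_j$ is \emph{not} an edge of the $H$-quotient graph.

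Fix distinct $i,j$. The second condition in the definition of friendly sequences gives exactly two possibilities.

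In the first case, $G[A_i\cup A_j]=K_{a_i,a_j}$ and $B_i\cup B_j$ is independent in $H$. Then every $x\in A_i$ is adjacent to every $y\in A_j$, so $A_iA_j$ is an edge of the $G$-quotient graph. On the other hand, $B_i$ and $B_j$ are nonempty and $B_i\cup B_j$ is independent, so any $x\in B_i$, $y\in B_j$ satisfy $xy\notin E(H)$. Hence $B_iB_j$ is not an edge of the $H$-quotient graph.

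The second case is symmetric: $B_iB_j$ is an edge of the $H$-quotient graph and $A_iA_j$ is not an edge of the $G$-quotient graph.

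Since exactly one of the two cases occurs for each pair $i\neq j$, $A_iA_j$ is an edge precisely when $B_iB_j$ is a non-edge. Therefore $\varphi$ is an isomorphism from the $G$-quotient graph onto the complement of the $H$-quotient graph.

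The only step requiring care is nonemptiness. If, say, $B_j=\emptyset$, then the "for any $x\in B_i$, $y\in B_j$" condition would hold vacuously, and $B_iB_j$ would wrongly count as an edge of the $H$-quotient graph. The sets $A_i$ and $B_i$ are implicitly nonempty, since they index vertices of the product; the condition $a_i=1$ or $b_i=1$ shows that at least one of $A_i,B_i$ is nonempty. I would state nonemptiness as part of the standing convention on such $t$-tuples. The non-completeness hypothesis plays no role in this argument.
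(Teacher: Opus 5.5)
Your proof is correct and is exactly the unpacking of the definition that the paper intends when it says the observation follows directly from friendliness: $A_i\mapsto B_i$ is the isomorphism, and each alternative in the second friendliness condition makes $A_iA_j$ an edge precisely when $B_iB_j$ is a non-edge. Your two side remarks are also accurate: nonemptiness of every $A_i$ and $B_i$ has to be assumed (the paper does this implicitly, as in Theorem~\ref{friends}), and the non-completeness hypothesis plays no role.
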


\begin{theorem}\label{friends}
   Let $G$ and $H$ be graphs and let $S\subseteq V(G\diamond H)$. A set $S$ is an independent set of $G\diamond H$ if and only if the pairwise distinct nonempty sets from $\{p_G(S^h):\, h\in V(H)\}$ and $\{p_H(^g\!S):\, g\in V(G)\}$ can be ordered as sequences $(A_1,\ldots ,A_k)$ and $(B_1,\ldots ,B_k)$, respectively, in such a way that the sequences are friendly and $S= \bigcup_{i=1}^k (A_i \times B_i)$.   
\end{theorem}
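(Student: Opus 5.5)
We prove both directions, starting with the easier one: friendly sequences give independent sets. Let $(A_1,\dots,A_k)$ and $(B_1,\dots,B_k)$ be friendly and put $S=\bigcup_i A_i\times B_i$. Take two distinct vertices $(g,h),(g',h')\in S$.

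First suppose both lie in the same block $A_i\times B_i$. Since $a_i=1$ or $b_i=1$, the two vertices share a coordinate. The differing coordinates lie in the independent set $A_i$ or $B_i$, so they are nonadjacent.

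Now suppose $(g,h)\in A_i\times B_i$ and $(g',h')\in A_j\times B_j$ with $i\ne j$. Disjointness gives $g\ne g'$ and $h\ne h'$. Say $G[A_i\cup A_j]=K_{a_i,a_j}$ and $B_i\cup B_j$ is independent. Then $gg'\in E(G)$ and $hh'\notin E(H)$, which is neither a direct nor a co-direct edge. The other case is symmetric. So $S$ is independent by \eqref{modular_nbrs}.

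For the converse, let $S$ be independent. We rely on two local facts, both immediate from \eqref{modular_nbrs}.

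First, each $S^h$ projects to an independent set of $G$, and each $^g\!S$ projects to an independent set of $H$. This holds because Cartesian edges are excluded.

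Second, if $(g,h),(g',h')\in S$ with $g\ne g'$ and $h\ne h'$, then exactly one of $gg'\in E(G)$, $hh'\in E(H)$ holds.

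The key structural claim is that $S$ contains no configuration $(g,h),(g,h'),(g',h)$ with $g\ne g'$ and $h\ne h'$. Indeed, $gg'\notin E(G)$ and $hh'\notin E(H)$ by the first fact. Can $(g',h')$ also lie in $S$? The pair $(g',h),(g,h')$ would then be nonadjacent in both coordinates, a co-direct edge, so no. Consequently, for every $(g,h)\in S$, at least one of the sets $p_H(^g\!S)$ and $p_G(S^h)$ is the singleton $\{g\}$ or $\{h\}$ respectively.

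I expect the main obstacle to be turning this into the block decomposition $S=\bigcup A_i\times B_i$ with rows and columns matched bijectively. My approach is to consider the bipartite incidence graph on $V(G)\sqcup V(H)$ with edges given by $S$. By the claim above, every edge has an endpoint of degree $1$, so each connected component is a star.

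Each star contributes a block. A star centred at $h$ gives $A\times\{h\}$ with $A=p_G(S^h)$, and for every $g\in A$ we have $p_H(^g\!S)=\{h\}$. A star centred at $g$ is handled symmetrically, and a single edge gives a $1\times1$ block. Hence the distinct nonempty rows and the distinct nonempty columns correspond bijectively to the stars. Taking $A_i,B_i$ to be the vertex sides of the $i$-th star gives $S=\bigcup A_i\times B_i$ with $a_i=1$ or $b_i=1$.

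The $A_i$ are pairwise disjoint, and so are the $B_i$, because the stars are vertex-disjoint components. Each $A_i$ is independent, being either a singleton or a projection of a row, and similarly for each $B_i$.

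It remains to check the friendly condition for $i\ne j$. Take $(x,y)\in A_i\times B_i$ and $(x',y')\in A_j\times B_j$. By the second fact, exactly one of $xx'\in E(G)$, $yy'\in E(H)$ holds. We must show the same alternative holds for all such choices, and that the sets $A_i\cup A_j$ and $B_i\cup B_j$ are independent on the appropriate side.

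Suppose $b_i=1$, so $B_i=\{y\}$ is fixed, and vary $x\in A_i$. For fixed $(x',y')$, the status of $yy'$ does not depend on $x$. Therefore the status of $xx'$, which is its negation, is the same for all $x\in A_i$.

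Next vary $x'$ and $y'$ within block $j$. One of $A_j$ and $B_j$ is a singleton, so only one coordinate moves. The same argument then shows the alternative is constant across the whole pair of blocks.

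This yields either $G[A_i\cup A_j]=K_{a_i,a_j}$ with $B_i\cup B_j$ independent, or the reverse, which completes the proof.
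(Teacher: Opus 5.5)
Your proof is correct, and it rests on the same key step as the paper's: an $L$-shaped configuration $(g,h),(g,h'),(g',h)$ in $S$ is impossible, because $(g',h)$ and $(g,h')$ would be joined by a co-direct edge.

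Two presentation slips should be fixed. First, in that step the question whether $(g',h')$ lies in $S$ is irrelevant, since $(g',h)$ and $(g,h')$ already belong to the configuration. Second, in the next sentence the singletons are swapped: it should read $p_H({}^g\!S)=\{h\}$ or $p_G(S^h)=\{g\}$.

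Beyond the shared key step, your route differs in how the blocks are extracted and how friendliness is checked.
\begin{itemize}
\item \emph{Extracting the blocks.} The paper runs a greedy procedure on a shrinking set $X\subseteq p_H(S)$. It must then argue separately that the resulting $A_i$ are pairwise disjoint, which takes a second use of the $L$-argument plus bookkeeping about $X$. Your bipartite incidence graph turns the $L$-lemma into ``every edge has an endpoint of degree one,'' so the nontrivial components are stars. Disjointness, covering, the condition $a_i=1$ or $b_i=1$, and the bijection between distinct nonempty rows, distinct nonempty columns and blocks then all follow at once from the component decomposition. That bijection is demanded by the statement, and the paper leaves it largely implicit.
\item \emph{Checking friendliness.} The paper argues by contradiction through a three-way case analysis of how the condition could fail. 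You use the dichotomy ``exactly one of $xx'\in E(G)$, $yy'\in E(H)$'' and show it stays constant when one coordinate moves at a time. This is direct, and it shows why the friendly condition is exactly what independence forces. You should add one line: once the alternative is constant, independence of $A_i\cup A_j$ (or $B_i\cup B_j$) follows from independence of each part and the absence of cross edges.
\end{itemize}

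In return, the paper's version is explicitly algorithmic and stays in the layer/projection language it reuses later.
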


\begin{proof}
    Let $S$ be an independent set of $G\diamond H$. In the proof we will first construct sequences of sets from $\{p_G(S^h):h\in V(H)\}$ and $\{p_H(^g\!S):g\in V(G)\}$ and then prove that such sequences are friendly. We describe construction as an iterative procedure in the following way. Start with $X:=p_H(S)$ and $i:=1$. At step $i$ we do the following. Let $h$ be an arbitrary vertex in $X$. Let $A_i=p_G(S^h)$ and $a_i=|A_i|$. Since $S$ is an independent set of $G \diamond H$, $A_i$ is an independent set of $G$. If $a_i=1$, then there exists $g \in V(G)$ such that $A_i=\{g\}$. In this case define $B_i=p_H(^g\!S)$ and let $b_i=|B_i|$. Again note, that $B_i$ is an independent set of $H$. If $a_i > 1$, then we first show that for any $g \in A_i$, $p_H(^g\!S)=\{h\}$. Indeed, if there exists $h' \in p_H(^g\!S)\setminus \{h\}$, then for any $g' \in A_i \setminus \{g\}$ it holds that $(g,h),(g,h'),(g',h) \in S$. Since $S$ is an independent set of $G \diamond H$ $gg' \notin E(G), hh' \notin E(H)$. Therefore $(g,h')(g',h) \in E(G \diamond H)$, which contradicts the fact that $S$ is an independent set of $G \diamond H$. Hence for any $g \in A_i$, $p_H(^g\!S)=\{h\}$. Thus we can define, $B_i=\{h\}$ (or equivalently, choose arbitrary $g \in A_i$, then $B_i=p_H(^g\!S)$) and $b_i=1$. At the end of step $i$, we update $X:=X\setminus B_i$ and set $i:=i+1$. Now, if $X \neq \emptyset$, then we continue with step $i+1$ in such a way that we repeat the procedure at step $i$ with updated $X$. If $X=\emptyset$, then we stop the procedure. The procedure will clearly stop in a finite number of steps, since at each step the cardinality of $X$ is decreased.

    Say that procedure is finished after $p$ steps. We will show that the sequences $(A_1,\ldots , A_p)$ and $(B_1,\ldots , B_p)$ are friendly. Construction of $B_1,\ldots ,B_p$ directly implies that $\{B_1,\ldots , B_p\}$ is a partition of $p_H(S)$ to independent sets. Clearly, also sets in ${\cal{A}}=\{A_1,\ldots ,A_p\}$ are independent sets. Next, we show that $\cal{A}$ is a partition of $p_G(S)$. Suppose first that there exist $i < j \leq p$ such that $A_i \cap A_j \neq \emptyset$. Then there exists $x \in A_i \cap A_j$. By definition of sets from $\cal{A}$ (constructed in the above algorithm), it follows that there exist $h,h' \in V(H)$, $h \neq h'$ such that $A_i=p_G(S^h)$ and $A_j=p_G(S^{h'})$. If $a_i=a_j=1$ (and thus $A_i=A_j=\{x\}$), then the definition of $B_i$ implies that $h,h' \in B_i$. Thus $h,h'$ are contained in $X$ until the end of step $i$ and non of $h,h'$ is contained in $X$ at step $j$ of the above algorithm. Hence,  $A_j=p_G(S^{h'})$ is not possible. If $\max{\{a_i,a_j\}} > 1$, then assume without loss of generality that $a_i >1$. Then for any $g \in A_i \setminus \{x\}$ vertices $(g,h),(x,h),(x,h') \in S$. Since $S$ is independent set of $G \diamond H$, it follows that $gx \notin E(G)$ and $hh' \notin E(H)$. Hence $(g,h)(x,h') \in E(G \diamond H)$, a contradiction, and the sets from $\cal{A}$ are pairwise disjoint independent sets that clearly cover $p_G(S)$. 

    To finish this direction it remains to show that both conditions from the definition of friendly sequences are satisfied. From the construction of $A_i,B_i$ it follows that $a_i \neq 1$ implies $b_i=1$, which proves the first condition and implies that $S=\bigcup_{i=1}^p A_i \times B_i$. For the second condition we need to check three possibilities for any  $A_i,A_j\in{\cal A}$ and any $B_i,B_j\in{\cal B}$. First, if $G[A_i\cup A_j]\neq K_{a_i,a_j}$ and $B_i\cup B_j$ is an independent set of $H$, then $g_ig_j\notin E(G)$ for some $g_i\in A_i$ and $g_j\in A_j$. Hence, for any $h_i\in B_i$ and any $h_j\in B_j$, $(g_i,h_i), (g_j,h_j)$ are two vertices of $S$ that are adjacent in $G \diamond H$, a contradiction. By symmetry, we obtain the same contradiction if we exchange the role of $G$ and $H$. Next, if $G[A_i\cup A_j]= K_{a_i,a_j}$ and $B_i\cup B_j$ is not an independent set of $H$, then $h_ih_j\in E(H)$ for some $h_i\in B_i$ and $h_j\in B_j$. In this case, for any $g_i\in A_i$ and any $g_j\in A_j$ vertices $(g_i,h_i),(g_j,h_j)$ are from $S$ and are adjacent in $G \diamond H$, a contradiction. Again, we obtain the same contradiction if we exchange the role of $G$ and $H$. Finally, $G[A_i\cup A_j]\neq K_{a_i,a_j}$, $A_i\cup A_j$ is not independent in $G$, $H[B_i\cup B_j]\neq K_{b_i,b_j}$ and $B_i\cup B_j$ is not an independent set of $H$. Hence there exist $g,g' \in V(G[A_i\cup A_j])$ such that $gg' \in E(G)$ and there exist $h,h'\in V(H[B_i\cup B_j])$ such that $hh' \in E(H)$. This two edges now yield an edge between two vertices from $S$, a contradiction. Hence, the second condition is also fulfilled and thus $(A_1,\ldots ,A_p)$ and $(B_1,\ldots , B_p)$ are friendly sequences.

    Conversely, let $S \subseteq V(G \diamond H)$ and let ${\cal{A}}=(A_1,\ldots , A_t)$ be a sequence of distinct nonempty sets from $\{p_G(S^h):\, h\in V(H)\}$ and ${\cal{B}}=(B_1,\ldots , B_t)$ a sequence of distinct nonempty sets in $\{p_H(^g\!S):\, g\in V(G)\}$  such that sequences $(A_1,\ldots , A_t)$ and $(B_1,\ldots ,B_t)$ are friendly and $S=\bigcup_{i=1}^t(A_i \times B_i)$. We will show that $S$ is an independent set of $G \diamond H$. Note first that by the definition of friendly sequences, sets $A_1,\ldots ,A_t$ must be pairwise disjoint and independent in $G$ and sets $B_1,\ldots ,B_t$ must be pairwise disjoint and independent in $H$. As $S=\bigcup_{i=1}^t(A_i \times B_i)$ and for any $i \in [t]$ $a_i=1$ or $b_i=1$, it follows that  $(G\diamond H)[A_i \times B_i]$ is  isomorphic to $G[A_i]$ or $H[B_i]$ and thus it is an edgeless graph.  Hence there are no Cartesian edges between vertices of $S$. Fix any different $i,j\in[t]$. The second condition of friendliness implies that either we have all edges between vertices of $A_i$ and $A_j$ and no edge between $B_i$ and $B_j$ or no edges between vertices of $A_i$ and $A_j$ and all edges between $B_i$ and $B_j$. In both cases we have no edge between vertices of $A_i\times B_i$ and $A_j\times B_j$. Since $i$ and $j$ were arbitrary, $S$ must be an independent set of $G\diamond H$.
\end{proof}

By Theorem \ref{friends} and its proof, every independent set $S$ of $G\diamond H$ is generated by friendly sequences $(A_1,\dots,A_t)$ and $(B_1,\dots,B_t)$, more precisely $S=\bigcup_{i=1}^t A_i \times B_i$. Moreover, Theorem \ref{friends} implies that an independent set $S$ of $G \diamond H$ has the form presented on Figure~\ref{fig:independent}. Such is also an $\alpha(G\diamond H)$-set which implies the following.

\begin{corollary}\label{alpha}
Let $G$ and $H$ be graphs. Then
\[
\alpha(G \diamond H)
=
\max \left\{
\sum_{i=1}^{t} \bigl(|A_i|+|B_i|\bigr)-t
:
(\mathcal A,\mathcal B)\text{ are friendly and }
|\mathcal A|=|\mathcal B|=t
\right\}.
\]

%If $\cal{A}$ is a sequence of disjoint independent subsets of $V(G)$, $\cal{B}$ a sequence of disjoint independent subsets of $V(H)$ and $|{\cal{A}}|=|{\cal{B}}|=t$, then     
%$$\alpha(G \diamond H)=\max_{({\cal A},{\cal B}) \text{ are friendly}} \Big\{ \sum_{i=1}^t(|A_i|+|B_i|) - t  \Big\}.$$ 
\end{corollary}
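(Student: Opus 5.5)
The plan is to prove the two inequalities separately, with Theorem~\ref{friends} doing most of the work. The one new ingredient is a cardinality count: for each pair we have $a_i=1$ or $b_i=1$, so
\[
|A_i\times B_i|=a_ib_i=a_i+b_i-1 .
\]
(If $a_i=1$ the product is $b_i=1+b_i-1$, and symmetrically if $b_i=1$.)

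For the lower bound, take any friendly pair $(\mathcal A,\mathcal B)$ of length $t$ and set $S=\bigcup_{i=1}^t A_i\times B_i$. The converse direction of Theorem~\ref{friends} uses only friendliness, together with the fact that the sets $A_i$ (resp.\ $B_i$) are pairwise disjoint independent sets. From this it follows that $S$ is independent in $G\diamond H$.

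To apply the theorem literally, we would check that $p_G(S^h)=A_i$ for $h\in B_i$, which holds by disjointness of the $B_j$. Similarly $p_H({}^g\!S)=B_i$ for $g\in A_i$. So the $A_i$ and $B_i$ are exactly the distinct nonempty projections, and they are pairwise distinct because they are disjoint and nonempty. Alternatively, one can just repeat the short edge check from the end of that proof.

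The blocks $A_i\times B_i$ are pairwise disjoint, since the $A_i$ are disjoint. Therefore
\[
|S|=\sum_i a_ib_i=\sum_i(a_i+b_i)-t,
\]
and hence $\alpha(G\diamond H)$ is at least the maximum.

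For the upper bound, let $S$ be an $\alpha(G\diamond H)$-set. The forward direction of Theorem~\ref{friends} gives friendly sequences $(A_1,\dots,A_k)$ and $(B_1,\dots,B_k)$ with $S=\bigcup A_i\times B_i$. By the same disjointness and counting argument, $|S|=\sum_{i=1}^k(a_i+b_i)-k$, which is at most the maximum.

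Some care is needed with degenerate cases. The empty graph and $t=0$ give value $0$, and the maximum is over a finite nonempty family, since single-vertex pairs are friendly. I do not expect a real obstacle here. The only point needing attention is that the blocks $A_i\times B_i$ are disjoint and that the identity $a_ib_i=a_i+b_i-1$ relies on the first friendliness condition.
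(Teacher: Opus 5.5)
Your proposal is correct and follows the paper's route: the paper derives the corollary directly from both directions of Theorem~\ref{friends}. You make explicit the details the paper leaves implicit, namely the count $|A_i\times B_i|=a_i+b_i-1$ from the first friendliness condition, the disjointness of the blocks, and the check that the $A_i$ and $B_i$ are exactly the distinct nonempty projections, so that the converse direction applies.
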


\begin{figure}[ht!]
\begin{center}
\begin{tikzpicture}[scale=1,style=thick,x=1cm,y=1cm]
\def\vr{4pt} % \vr = vertex radius;

% define vertices
%%%%%
%%%%%
\path (0,0) coordinate (a1);
\path (0.5,0) coordinate (a2);
\path (1.5,0) coordinate (a3);
\draw (1,0) node {$\ldots$};

\path (2.2,0.7) coordinate (b1);
\path (2.7,0.7) coordinate (b2);
\path (3.7,0.7) coordinate (b3);
\draw (3.2,0.7) node {$\ldots$};

\path (4.4,1.4) coordinate (c1);
\path (4.4,1.9) coordinate (c2);
\path (4.4,2.9) coordinate (c3);
\draw (4.4,2.4) node {$\vdots$};

\path (5.1,3.6) coordinate (d1);
\path (5.6,3.6) coordinate (d2);
\path (6.6,3.6) coordinate (d3);
\draw (6.1,3.6) node {$\ldots$};

\path (7.3,4.3) coordinate (e1);
\path (7.3,4.8) coordinate (e2);
\path (7.3,5.8) coordinate (e3);
\draw (7.3,5.3) node {$\vdots$};

\path (0,-1.5) coordinate (y1);
\draw (y1) circle (\vr);
\path (1.5,-1.5) coordinate (y2);
\draw (y2) circle (\vr);
\draw (0.75,-1.5) node {$\ldots$};
\draw (0.7,-1.5) ellipse (1.1cm and 0.3cm);
\draw (0.75,-2.2) node {$A_1$};

\path (2.2,-1.5) coordinate (y3);
\draw (y3) circle (\vr);
\path (3.7,-1.5) coordinate (y4);
\draw (y4) circle (\vr);
\draw (2.9,-1.5) node {$\ldots$};
\draw (3,-1.5) ellipse (1.1cm and 0.3cm);
\draw (2.95,-2.2) node {$A_2$};

\path (4.4,-1.5) coordinate (y5);
\draw (y5) circle (\vr);
\draw (4.4,-2.2) node {$A_3$};

\path (5.1,-1.5) coordinate (y6);
\draw (y6) circle (\vr);
\path (6.6,-1.5) coordinate (y7);
\draw (y7) circle (\vr);
\draw (5.8,-1.5) node {$\ldots$};
\draw (5.85,-1.5) ellipse (1.1cm and 0.3cm);
\draw (5.85,-2.2) node {$A_4$};

\path (7.3,-1.5) coordinate (y8);
\draw (y8) circle (\vr);
\draw (7.3,-2.2) node {$A_5$};

\path (-1.5,0) coordinate (x1);
\draw (x1) circle (\vr);
\draw (-2.1,0) node {$B_1$};

\path (-1.5,0.7) coordinate (x2);
\draw (x2) circle (\vr);
\draw (-2.1,0.7) node {$B_2$};
\path (-1.5,1.4) coordinate (x3);
\draw (x3) circle (\vr);
\path (-1.5,2.9) coordinate (x4);
\draw (x4) circle (\vr);
\draw (-1.5,2.25) node {$\vdots$};

\path (-1.5,3.6) coordinate (x5);
\draw (x5) circle (\vr);
\draw (-2.1,3.6) node {$B_4$};

\path (-1.5,4.3) coordinate (x6);
\draw (x6) circle (\vr);
\path (-1.5,5.8) coordinate (x7);
\draw (x7) circle (\vr);
\draw (-1.5,5.1) node {$\vdots$};

\draw (-1.5,2.2) ellipse (0.3cm and 1.1cm);
\draw (-2.1,2.2) node {$B_3$};
\draw (-1.5,5.1) ellipse (0.3cm and 1.1cm);
\draw (-2.1,5.1) node {$B_5$};

\draw (-1,-1)--(10,-1)--(10,8)--(-1,8)--(-1,-1);

\draw (a1) [fill=black] circle (\vr); 
\draw (a2) [fill=black] circle (\vr); 
\draw (a3) [fill=black] circle (\vr); 
\draw (b1) [fill=black] circle (\vr); 
\draw (b2) [fill=black] circle (\vr); 
\draw (b3) [fill=black] circle (\vr); 
\draw (c1) [fill=black] circle (\vr); 
\draw (c2) [fill=black] circle (\vr); 
\draw (c3) [fill=black] circle (\vr); 
\draw (d1) [fill=black] circle (\vr); 
\draw (d2) [fill=black] circle (\vr); 
\draw (d3) [fill=black] circle (\vr); 
\draw (e1) [fill=black] circle (\vr); 
\draw (e2) [fill=black] circle (\vr); 
\draw (e3) [fill=black] circle (\vr); 

\end{tikzpicture}
\end{center}
\caption{Independent set in modular product} \label{fig:independent}
\end{figure}

%%%%%%%%%%%%%%%%%%%%%%%%%%%%%%%%%%%%%%%%%%%%%%%%%%%%%%%%5
\section{Some consequences of Theorem \ref{friends}}\label{s:consequences}

In this section we present some special cases of friendly sequences ${\cal A}$ and ${\cal B}$ of $G$ and $H$, respectively, that yield several sharp bounds of $\alpha(G \diamond H)$. We start with $t=1$, that is ${\cal A}=(A_1)$ and ${\cal B}=(B_1)$. We may have the following two options: $a_1\geq 1$ and $b_1=1$ or $a_1=1$ and $b_1\geq 1$. In the first case we also have $a_1\leq \alpha(G)$ and in the second $b_1\leq \alpha (H)$. 

\begin{proposition}\label{alphaBound}
    For any graphs $G$ and $H$ of order at least three we have 
    $$\alpha(G \diamond H) \geq \max\{\alpha(G),\alpha(H)\}$$ 
    and the bound is sharp. 
\end{proposition}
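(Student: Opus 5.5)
The lower bound comes from the case $t=1$ of Theorem~\ref{friends}, as set up just before the statement. Take an $\alpha(G)$-set $A_1$ of $G$ and any single vertex $h\in V(H)$, and put $B_1=\{h\}$. The one-term sequences ${\cal A}=(A_1)$ and ${\cal B}=(B_1)$ are friendly: $b_1=1$, and the second condition is vacuous since there is only one index. By Theorem~\ref{friends}, or directly by Corollary~\ref{alpha}, $A_1\times\{h\}$ is an independent set of $G\diamond H$ of size $a_1+b_1-1=\alpha(G)$. The direct check is also immediate: the layer $G^h$ induces a copy of $G$, so an independent set of $G$ lifts to the layer. Exchanging the roles of $G$ and $H$ gives $\alpha(G\diamond H)\ge\alpha(H)$, and taking the maximum gives the bound.

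For sharpness I would give a family of factors, neither complete, for which $\alpha(G\diamond H)$ is exactly $\max\{\alpha(G),\alpha(H)\}$. (The paper excludes complete factors, and $G\diamond K_n$ already gives equality.) The upper bound will come from Corollary~\ref{alpha}: every friendly pair contributes $\sum_i(a_i+b_i)-t$, and I will bound this for every friendly pair. A natural candidate is $G=H=P_3$, or more generally $G=H=K_{1,2}$. Then $\alpha=2$ and $n=3$, and I expect $\alpha(P_3\diamond P_3)=2$.

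To verify this I would go through the friendly pairs. With $t=1$ the value is at most $\max\{\alpha(G),\alpha(H)\}$. With $t\ge 2$, Observation~\ref{quotient} says the $G$-quotient graph on ${\cal A}$ is the complement of the $H$-quotient graph on ${\cal B}$. In $P_3$ the disjoint independent sets are limited: either the leaves $\{x,z\}$ together with the center $\{y\}$, or singletons. Take $t=2$ with ${\cal A}=(\{x,z\},\{y\})$, which forms a complete bipartite pair in $G$. Friendliness then forces $b_1=1$ and $B_1\cup B_2$ independent in $H$, so $B_1,B_2$ are single leaves. The total is $3+2-2=3$.

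So $P_3\diamond P_3$ has an independent set of size $3>2$, and the candidate fails. I would therefore switch to factors where this cross-structure cannot arise. The main obstacle is precisely this: finding a non-complete pair for which no friendly pair with $t\ge2$ beats $t=1$. The quantity $\sum_i(a_i+b_i)-t$ grows whenever the quotient graphs are complementary, so I need a factor whose independent sets cannot be arranged complementarily in a profitable way.

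My first try would be $G=H=\overline{K_2}\cup K_1$ variants, or $G=N_n$ paired with a suitable $H$. For $G=N_n$ every pair $A_i,A_j$ is non-adjacent, so all $B_i$ must be pairwise completely joined. The sum is then at most $n$ plus the size of a complete multipartite structure in $H$ minus $t$, and choosing $H=K_m$ minus an edge, or similar, should pin this down to exactly $\max\{n,\alpha(H)\}$. I would check $N_3\diamond P_3$ explicitly, requiring order at least three, and if necessary adjust to a pair where the computation closes. This case analysis via Corollary~\ref{alpha} and Observation~\ref{quotient} is the step I expect to take the real work.
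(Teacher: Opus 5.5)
Your lower-bound argument is the paper's: the case $t=1$ of Corollary~\ref{alpha}, i.e.\ an $\alpha(G)$-set placed in a single $G$-layer, and symmetrically for $H$. The gap is in the sharpness part. You never exhibit a pair attaining equality, and the candidates you propose to check fail. For $G=N_3$ with $V(G)=\{a,b,c\}$ and $H=P_3=xyz$, the set $\{(a,x),(a,z),(b,y),(c,y)\}$ is independent in $G\diamond H$. It comes from the friendly pair $(\{a\},\{b,c\})$, $(\{x,z\},\{y\})$, so $\alpha(N_3\diamond P_3)\ge 4>3$. The same construction gives $\alpha(N_n\diamond(K_m-e))\ge n+1$. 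More generally, if $\alpha(G)\ge\alpha(H)$ and $H$ contains an induced $K_{2,1}$, then $\alpha_1(H)\ge 2$, and Theorem~\ref{thm:lower1} gives $\alpha(G\diamond H)\ge\alpha(G)+1$. This is exactly Proposition~\ref{p:BoundCharacterization}.

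The fix is the remark you made in parentheses and then set aside. The proposition does not exclude complete factors, and the paper proves sharpness precisely with $G\diamond K_n$, $n\ge 3$. Since $K_n$ has no non-edges, $G\diamond K_n$ has no co-direct edges, so $G\diamond K_n=G\boxtimes K_n$. In that graph two distinct vertices are adjacent exactly when their $G$-coordinates are equal or adjacent. Hence $\alpha(G\diamond K_n)=\alpha(G)=\max\{\alpha(G),\alpha(K_n)\}$.

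If you want non-complete factors, your own observation about $G=N_n$ finishes the job once $H$ is chosen correctly. For $H=N_m$, no two blocks $B_i,B_j$ can be completely joined, so only $t=1$ occurs. More generally, for $H$ a disjoint union of cliques, two blocks can be completely joined only if both are singletons. Then every friendly pair with $t\ge 2$ has value $\sum_i a_i\le n$, while $t=1$ gives at most $\max\{n,\alpha(H)\}$. For example, $\alpha(N_p\diamond N_q)=\max\{p,q\}$ for $p,q\ge 3$.
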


\begin{proof}
    The bound follows from Corollary~\ref{alpha} for $t=1$. The bound is clearly sharp, since for any positive integer $n$ and any graph $G$, $\alpha(G \diamond K_n)=\alpha(G) =\max{\{\alpha(G),\alpha(K_n)\}}$.
\end{proof}

As proved in previous proposition, if one graph $G$ or $H$ is a complete graph, then the bound from Proposition~\ref{alphaBound} is sharp. Anyway, the bound can be sharp also if both graphs $G$ and $H$ are non-complete. For example, let $G$ be a disjoint union of $K_1$ and $K_2$ and let $H=N_2$. Then $G \diamond H$ is isomorphic to $2K_3$ and thus $\alpha(G \diamond H)=2 = \max{\{\alpha(G), \alpha(H)\}}$. This leads to the problem of determining all pairs of graphs $G,H$ for which $\alpha(G \diamond H) = \max\{\alpha(G),\alpha(H)\}$. We will present a necessary condition for this problem later, after the introduction of the local independence number.

An independent set treated in Proposition \ref{alphaBound} lie completely in one $G$-layer or one $H$-layer and we have $b_i=1$ or $a_i=1$, respectively. A generalization of this idea is that we have $b_i=1$ for each $i\in [t]$ (or symmetrically $a_i=1$ for each $i\in[t]$). Next we deal with this special case of ${\cal A}$ and ${\cal B}$ that are friendly. For this let $\cp(G)$ denote the order of the maximum induced complete multipartite subgraph, i.e.\ $$\cp(G)=\max{\{|S|;G[S] \textrm{ is a complete multipartite graph}\}}.$$
Note that $\cp(G)=\max{\{|S|; \overline{G}[S] \textrm{ is a disjoint union of cliques}\}}$. This invariant (the order of a largest subgraph that is a disjoint union of cliques) was already studied~\cite{HB-2021, FG-10} and is closely related to another known graph invariant called \emph{cluster vertex deletion number}~\cite{DK-2012, GG-2004}. The problem was not studied just for the disjoint union of cliques but also for other types of induced subgraphs. Since edgeless graph is a complete 1-partite graph, it is clear that $\cp(G) \geq \alpha(G)$.

Nevertheless, we introduce a variation of $\cp(G)$ that avoids complete multipartite subgraphs having too many partition sets. Thus, for a fix positive integer $r\geq 2$ and a graph $G$ we define $\cp_{r}(G)$ as the maximum number of vertices of a complete $k$-partite subgraph of $G$ where $k\leq r$. It follows directly from the definition that $\cp_{r}(G)\leq\cp(G)$. We have $\cp_{r}(K_n)=\min\{n,r\}$, for $n\geq 2$, as $K_n=K_{\underbrace{1,\ldots ,1}_{n}}$ and thus $K_n$ contains complete $n$-partite graph of order $n$.  

\begin{theorem}\label{setA}
If $G$ and $H$ are non-complete graphs, then   
$$\alpha(G\diamond H)\geq \max\{\cp_{\alpha(H)}(G),\cp_{\alpha(G)}(H)\}$$
and the bound is sharp for $K_{p_1,\dots,p_t}\diamond N_t$ for $t\geq 2$. 
\end{theorem}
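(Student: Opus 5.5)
The lower bound comes straight from Corollary~\ref{alpha}, applied to friendly sequences in which every $b_i=1$ (or, symmetrically, every $a_i=1$). The sharpness claim will be checked by bounding every pair of friendly sequences for $K_{p_1,\dots,p_t}\diamond N_t$.

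\textbf{Lower bound.} By symmetry of the modular product it suffices to show $\alpha(G\diamond H)\ge \cp_{\alpha(H)}(G)$.
\begin{enumerate}[(i)]
\item Let $S\subseteq V(G)$ induce a complete $k$-partite subgraph with $k\le\alpha(H)$ and $|S|=\cp_{\alpha(H)}(G)$, and let $A_1,\dots,A_k$ be its parts. These are pairwise disjoint independent sets of $G$, and $G[A_i\cup A_j]=K_{a_i,a_j}$ for all $i\ne j$.
\item Since $k\le\alpha(H)$, we can pick distinct pairwise nonadjacent vertices $h_1,\dots,h_k$ of $H$ and put $B_i=\{h_i\}$.
\item Both conditions of friendliness hold. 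First, $b_i=1$ for every $i$. Second, for $i\neq j$ we have $G[A_i\cup A_j]=K_{a_i,a_j}$, and $B_i\cup B_j=\{h_i,h_j\}$ is independent in $H$.
\item Corollary~\ref{alpha} then gives $\alpha(G\diamond H)\ge\sum_i(a_i+1)-k=|S|$.
\end{enumerate}

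\textbf{Sharpness.} Put $G=K_{p_1,\dots,p_t}$ and $H=N_t$, so $\alpha(H)=t$ and $\alpha(G)=p_1$. Since $G$ itself is complete $t$-partite, $\cp_t(G)=n(G)=\sum_j p_j$. Every induced subgraph of $N_t$ is edgeless, so $\cp_{p_1}(N_t)=t\le\sum_j p_j$. Hence the bound equals $\sum_j p_j$, and it remains to prove $\alpha(G\diamond N_t)\le\sum_j p_j$.

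\begin{enumerate}[(i)]
\item Take friendly sequences $(A_1,\dots,A_s)$ and $(B_1,\dots,B_s)$. In $N_t$ there are no edges, so $H[B_i\cup B_j]$ is never complete bipartite, since both sets are nonempty.
\item Consequently, for every pair $i\ne j$ the first alternative of friendliness must hold: $G[A_i\cup A_j]=K_{a_i,a_j}$.
\item In $G$, an independent set lies inside a single part, and two sets inside the same part are not completely joined. Hence the $A_i$ lie in $s$ distinct parts, so $s\le t$ and $\sum_i a_i\le\sum_{j\in U}p_j$, where $U$ is the set of $s$ used parts.
\item The $B_i$ are disjoint subsets of $V(N_t)$, so $\sum_i(b_i-1)\le t-s$.
\item Each of the $t-s$ unused parts has $p_j\ge1$, so $t-s\le\sum_{j\notin U}p_j$.
\item Combining, $\sum_i(a_i+b_i)-s\le\sum_{j\in U}p_j+\sum_{j\notin U}p_j=\sum_j p_j$.
\end{enumerate}

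By Corollary~\ref{alpha} this gives the upper bound, so equality holds. The only delicate step is this counting, which trades the excess of the $b_i$ against the unused parts of $G$.
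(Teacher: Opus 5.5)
Your proof is correct. The lower bound is the same construction as in the paper: you pair the parts of a maximum induced complete $k$-partite subgraph of $G$ with singletons from an independent set of $H$.

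For sharpness you take a genuinely different route.

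\textbf{The paper's route.} It argues by induction on $t$. To make the induction run, it proves the stronger claim $\alpha(K_{p_1,\dots,p_t}\diamond N_\ell)\le p_1+\cdots+p_\ell$ for every $\ell\le t$. It picks a vertex $x\in V_1$ with ${}^x\!S\neq\emptyset$ and shows that $S$ meets $V_1\times V(N_\ell)$, together with the layers indexed by $p_{N_\ell}({}^x\!S)$, only in ${}^x\!S$ (respectively $A_1\times B_1$). It then deletes this part to land in $K_{p_2,\dots,p_t}\diamond N_{\ell-i}$. The cases $i\ge 2$ and $i=1$ are treated separately, and only the latter invokes the friendly decomposition.

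\textbf{Your route.} You apply Corollary~\ref{alpha} globally. Since $N_t$ is edgeless and the $B_i$ are nonempty, the second alternative of friendliness can never occur. Hence the $G$-quotient graph of $\mathcal A$ is complete, which forces each $A_i$ into its own part of $K_{p_1,\dots,p_t}$. The surplus $\sum_i(b_i-1)\le t-s$ is then paid for by the $t-s$ unused parts, each of size at least $1$.

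\textbf{Comparison.} Your argument needs no induction, no auxiliary generalization and no case split. The same count also recovers the paper's intermediate claim directly: for $N_\ell$ with $\ell\le t$ it gives $\max_{|U|=s\le\ell}\sum_{j\in U}p_j+(\ell-s)\le p_1+\cdots+p_\ell$. The paper's layer-deletion argument works more concretely with the product structure but uses the structure theorem only partially. Yours exploits the full characterization and is shorter. You also explicitly check that the bound equals $\sum_j p_j$ (via $\cp_{p_1}(N_t)=t\le\sum_j p_j$), which the paper leaves implicit.
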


\begin{proof}
Let $K_{a_1,\ldots , a_t}$ be a $t$-partite induced subgraph of $G$ with $2 \leq t \leq \alpha(H)$ such that $a_1+\cdots +a_t=\cp_{\alpha(H)}(G)$. Moreover let $\{A_1,\dots,A_t\}$ be the $t$-partition of $K_{a_1,\ldots , a_t}$. Let $\{h_1,\ldots , h_t\}$ be an independent set of $H$. Then $(A_1,\ldots ,A_t)$ and $(\{h_1\},\ldots , \{h_t\})$ are clearly friendly and thus by Theorem~\ref{friends}, $S=\cup_{i=1}^t(A_i\times\{h_i\})$ is an independent set of $G\diamond H$  of cardinality $\cp_{\alpha(H)}(G)$. By symmetric argument we get an independent set of cardinality $\cp_{\alpha(G)}(H)$ and the bound follows. 

To prove that the bound is sharp for $K_{p_1,\ldots , p_t} \diamond N_t$, we have to show that $\alpha(K_{p_1,\ldots , p_t} \diamond N_t) \leq \cp_{t}(K_{p_1,\ldots , p_t})=p_1+\cdots +p_t$, since the other inequality follows from already proved bound. We will use induction on $t$ and show more general bound, that for any $\ell \leq t$, $\alpha(K_{p_1,\ldots , p_t} \diamond N_\ell) \leq p_1+\cdots +p_\ell$. For the base case, let $G$ be one partite graph, i.e.\ it is an edgeless graph, say $G=N_p$. The base case is completed vecause $\alpha(N_p \diamond N_1)=\alpha(N_p)=p=\cp_{\alpha(H)}(N_p)=\max{\{\cp_{\alpha(N_1)}(N_p),\cp_{\alpha(N_p)}(N_1)\}}$.

Let S be an $\alpha(K_{p_1,\ldots , p_t} \diamond N_\ell)$-set  for $\ell \leq t$. Denote $V(N_\ell)=\{y_1,\ldots ,y_\ell\}$ and let $V_1,\ldots ,V_t$ be a partition of $V(K_{p_1,\ldots ,p_t})$ with $|V_i|=p_i$ for any $i \in [t]$.  Let $x$ be an arbitrary vertex of $V(K_{p_1,\ldots ,p_t})$ such that $^x\!S \neq \emptyset$. Without loss of generality, let $x \in V_1$
 and let $\{y_1,\ldots ,y_i\}=p_{N_\ell}(^x\!S)$. Assume first that $i\geq 2$. Since $S$ is an independent set $(x,y_1),\ldots ,(x,y_i)$ are the only vertices of $S$ in the subgraph of $K_{p_1,\ldots ,p_t} \diamond N_\ell$ induced by $X=(V_1 \times V(N_\ell)) \cup (V(K_{p_2,\ldots ,p_\ell}) \times \{y_1,\ldots ,y_i\})$. Since the subgraph of $K_{p_1,\ldots ,p_t} \diamond N_\ell$ induced by the complement of $X$ (i.e.\ by $V(K_{p_1,\ldots ,p_t} \diamond N_\ell) \setminus X$) is isomorphic to $K_{p_2,\ldots , p_t} \diamond N_{\ell-i}$, the induction assumption implies that $|S| \leq i+ \alpha(K_{p_2,\ldots ,p_t} \diamond N_{\ell-i}) \leq i+p_2+\cdots +p_{\ell-i+1} \leq p_1+\cdots +p_\ell$ as $p_1+p_{\ell-i+2}+\cdots +p_\ell \geq i$ because $p_k \geq 1$ for any $k\in[t]$. 

We are left with $i=1$. Let $(A_1,\ldots ,A_t)$ and $(B_1,\ldots ,B_t)$ be a friendly partition for $S$ according to Theorem \ref{friends} such that $x\in A_1$. We have $b_1=1$ because $i=1$ and $a_1\leq p_1$ follows. Again, $A_1\times B_1$ are the only vertices of $S$ in the subgraph of $K_{p_1,\ldots ,p_t} \diamond N_\ell$ induced by $X=(V_1 \times V(N_\ell)) \cup (V(K_{p_2,\ldots ,p_\ell}) \times \{y_1\})$. As before the subgraph of $K_{p_1,\ldots ,p_t} \diamond N_\ell$ induced by the complement of $X$ is isomorphic to $K_{p_2,\ldots , p_t} \diamond N_{\ell-1}$. So,  the induction assumption implies that $|S| \leq a_1+ \alpha(K_{p_2,\ldots ,p_t} \diamond N_{\ell-1}) \leq p_1+p_2+\cdots +p_{\ell}$.
\end{proof}

Above lower bound is sharp for the two above mentioned families of graphs and we suspect that much more share this property. So, the next problem seems natural. 

\begin{problem}\label{problem1}
  Describe all pairs of graphs $G$ and $H$ with 
  $$\alpha(G \diamond H)=\max\{\cp_{\alpha(H)}(G),\cp_{\alpha(G)}(H)\}.$$ 
\end{problem}

We continue with $t=2$ where ${\cal A}=(A_1,A_2)$ and ${\cal B}=(B_1,B_2)$ are friendly. Now, we have two non-symmetric cases, which are $a_1\geq a_2\geq 1$ and $b_1=b_2=1$ or  $a_1\geq 2$, $a_2=b_1=1$ and $b_2\geq 2$. First one is a special case of Theorem \ref{setA} and we proceed with the second one. For this we recall some already investigated graph parameter. The {\emph{local independence number at distance}} $i$ of a graph $G$, $\alpha_i(G)$, is the maximum number of independent vertices at distance $i$ from any vertex, i.e.\ $\alpha_i(G)=\max{\{\alpha(G[N_i(x)]); x \in V(G)\}}$~\cite{FRS-4}. To present a lower bound for the independence number of a modular product we need local independence number at distance $1$. Note that if $\alpha_1(G)=k$, then $K_{k,1}$ is the largest induced star in $G$.   

\begin{theorem}\label{thm:lower1}
    If $G$ and $H$ are arbitrary non-complete graphs, then $$\alpha(G \diamond H) \geq \max{\{\alpha(G)+\alpha_1(H)-1, \alpha(H)+\alpha_1(G)-1\}}$$
    and the bound is sharp for $P_q\diamond P_r$ for $q\geq r\geq 5$, that is $\alpha(P_q\diamond P_r)=\lceil \frac{q}{2}\rceil+1$.
\end{theorem}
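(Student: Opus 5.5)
The lower bound comes from Corollary~\ref{alpha} with $t=2$, using the second non-symmetric configuration $a_1\ge 2$, $a_2=b_1=1$, $b_2\ge 2$ described just before the statement. By symmetry it suffices to prove $\alpha(G\diamond H)\ge \alpha(G)+\alpha_1(H)-1$.

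Let $I$ be an $\alpha(G)$-set. Since $G$ is non-complete, $|I|\ge 2$. Pick $g\in I$ and set $A_1=I\setminus\{g\}$ and $A_2=\{g\}$, so that $A_1\cup A_2$ is independent in $G$. If $H$ has no edges, then $\alpha_1(H)=0$ and the bound follows from Proposition~\ref{alphaBound}. Otherwise choose $x\in V(H)$ with $\alpha(H[N(x)])=\alpha_1(H)=k\ge 1$. Let $B_1=\{x\}$ and let $B_2$ be an independent set of size $k$ in $N(x)$, so that $H[B_1\cup B_2]=K_{1,k}$. Then $a_1b_1=a_1$ and $a_2b_2=b_2$, and the pair $(1,2)$ satisfies the second condition of friendliness. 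Hence $(A_1,A_2)$ and $(B_1,B_2)$ are friendly, and Theorem~\ref{friends} gives an independent set of size $(\alpha(G)-1)+k$.

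For sharpness on $P_q\diamond P_r$ with $q\ge r\ge 5$, the lower bound gives $\lceil q/2\rceil+1$, since $\alpha(P_q)=\lceil q/2\rceil\ge\lceil r/2\rceil$ and $\alpha_1(P_q)=\alpha_1(P_r)=2$. For the upper bound, take friendly sequences $(A_i)$, $(B_i)$ of length $t$ that realize an $\alpha$-set via Corollary~\ref{alpha}.

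The key structural facts about paths are as follows.
\begin{enumerate}
\item An induced complete bipartite subgraph of a path is $K_{1,1}$ or $K_{1,2}$.
\item Consequently, in the $G$-quotient graph each $A_i$ has degree at most $2$, and $A_i$ has degree at most $1$ if $a_i\ge 2$. The same holds for the $H$-quotient graph.
\item By Observation~\ref{quotient} the two quotient graphs are complements of each other. A graph on $t$ vertices whose complement also has maximum degree at most $2$ satisfies $t-1\le 4$, so $t\le 5$.
\end{enumerate}

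Split the indices into $I=\{i: a_i\ge 2\}$, $J=\{j: b_j\ge 2\}$ and the singleton indices $K$ with $a_i=b_i=1$. Then $|S|=\sum_{I}a_i+\sum_J b_j+|K|$.

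For distinct $i,i'\in I$, the sets $A_i,A_{i'}$ cannot be complete to each other, because $K_{\ge2,\ge2}$ does not occur in a path. So $B_i$ and $B_{i'}$ are adjacent single vertices of $H$ and $A_i\cup A_{i'}$ is independent. Hence the $B$'s indexed by $I$ form a clique in $P_r$, which gives $|I|\le 2$, and $\bigcup_I A_i$ is independent. Symmetrically $|J|\le 2$ and $\bigcup_J B_j$ is independent in $P_r$.

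The main obstacle is controlling the mixed terms: the interaction between $I$ and $J$, and the contribution of $K$. For each pair $(i,j)\in I\times J$, friendliness forces either $A_i$ complete to the single vertex $A_j$ (so $a_i=2$ and $A_j$ is the middle vertex of a $P_3$) with $B_i\cup B_j$ independent, or the symmetric situation. I would first try to show the following: when both $I$ and $J$ are nonempty, every large part is squeezed into a $K_{1,2}$, so $|S|\le 2+2+|K|$ with $t\le 5$, which is at most $\lceil q/2\rceil+1$ because $q\ge 5$. When $J=\emptyset$, the set $\bigcup_{I\cup K}A_i$ fails to be independent only along edges of the $G$-quotient. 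A short case check on $t\le 5$, using that each quotient vertex has degree at most $2$ and its complement has degree at most $2$, should show that at most one extra vertex can be gained beyond an independent set of $P_q$. This gives $|S|\le\lceil q/2\rceil+1$. The case $I=\emptyset$ is symmetric and yields the bound $\lceil r/2\rceil+1\le\lceil q/2\rceil+1$.
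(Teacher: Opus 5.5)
Your lower-bound argument is correct and is the paper's construction; treating edgeless $H$ separately, where $B_2$ would be empty, is a welcome extra. The gap is in the sharpness part. Facts (1)--(3) and $|I|,|J|\le 2$ are right, but both remaining cases are only announced, and the plan for the mixed case rests on a false claim.

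When $I,J\neq\emptyset$ it is not true that every large part is squeezed into a $K_{1,2}$. Your own extremal set ($A_1$ an $\alpha(P_q)$-set minus $g$, $A_2=\{g\}$, $B_1=\{y\}$, $B_2=N(y)$) has $1\in I$, $2\in J$ and $a_1=\lceil q/2\rceil-1\ge 3$ once $q\ge 7$. Friendliness of a pair $(i,j)\in I\times J$ forces only \emph{one} of $a_i,b_j$ to be $2$, with the other coordinate's union independent. Even granting the claim, $2+2+|K|$ exceeds the target $\lceil q/2\rceil+1=4$ for $q\in\{5,6\}$ unless $K=\emptyset$.

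What is missing is a proof that the mixed case forces $t=2$. Say $A_i=\{u,w\}$ is complete to $A_j=\{v\}$; the other orientation is symmetric. Then $N(v)=A_i$, so $A_j$ is complete to no other part. Friendliness therefore makes $B_j$ complete to $B_k$ for every $k\ne i,j$, and since $b_j\ge 2$ this means $B_k=\{z\}$ and $B_j=N(z)$. The vertex $y$ of $B_i$ is also adjacent to $z$: for $k\in I$ by your clique argument, and for $k\in K$ because $v$ is the only common neighbour of $u,w$, so $A_i$ cannot be complete to $A_k$. Hence $y\in B_j$, contradicting disjointness. So $t=2$, and $|S|=a_i+b_j\le\lceil q/2\rceil+1$ because the other coordinate's union is independent.

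The case $J=\emptyset$ is likewise only asserted, and the degree information you plan to use does not suffice. It allows $t=5$ with $G$-quotient $C_5$. Since $C_5$ is self-complementary and $2$-regular, all parts are singletons and $|S|=5>4$ for $q\in\{5,6\}$. It also allows $t=4$ with $G$-quotient $2K_2$ and two parts of size $2$. For example, with $P_q=v_1v_2\cdots v_q$, take $A_1=\{v_1,v_3\}$, $A_3=\{v_2\}$, $A_2=\{v_5,v_7\}$, $A_4=\{v_6\}$; this gives $|S|=6>\lceil q/2\rceil+1$ for $q\in\{7,8\}$.

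Both configurations are excluded only because the paths are acyclic. When $J=\emptyset$ every $B_i$ is a single vertex $y_i$, so the $H$-quotient is the induced linear forest $P_r[\{y_1,\dots,y_t\}]$; in the two examples it would have to be $C_5$ or $C_4$. Requiring the complement of a linear forest to have maximum degree at most $2$ gives $t\le 4$, after which only a few configurations remain to check.

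Supplying these arguments is exactly the work done in the paper's case analysis: $t=1$, $t=2$, and $t\ge 3$ split by whether two parts of $\mathcal A$ have size at least $2$. Without them the equality $\alpha(P_q\diamond P_r)=\lceil q/2\rceil+1$ is not established.
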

\begin{proof}
    Let $A=\{g_1,\ldots , g_k\}$ be an $\alpha(G)$-set and let $B_2=\{h_1,\ldots ,h_\ell\}$ be an independent set of $H$ that is contained in $N_H(h)$ for some $h \in V(H)$ and $|B_2|=\alpha_1(H)$. Define $A_1=A-\{g_1\}, A_2=\{g_1\}$ and $B_1=\{h\}$. Then it is straightforward to check that $(A_1,A_2)$ and $(B_1,B_2)$ are friendly sequences of $G$ and $H$, respectively. By Theorem~\ref{friends} the set   
    $$S=\{(g_1,h_1), (g_1,h_2),\ldots ,(g_1,h_\ell), (g_2,h),\ldots ,(g_k,h)\}$$ is an independent set of $G \diamond H$ of cardinality $\alpha_1(H)+\alpha(G)-1$. By symmetric argument the bound follows.

    For $P_q\diamond P_r$ recall that $\alpha(P_q)=\lceil \frac{q}{2}\rceil$ and $\alpha_1(P_r)=2$. Hence, $\alpha(P_q\diamond P_r)\geq \lceil \frac{q}{2}\rceil+1$ by our bound. Let ${\cal A}=(A_1,\dots,A_t)$ and ${\cal B}=(B_1,\dots, B_t)$ be friendly sequences of independent sets of $P_q$ and $P_r$, respectively, that yield an $\alpha(P_q\diamond P_r)$-set $S$ by Corollary~\ref{alpha}. If $t=1$, then for $b_1=1$ and $1\leq a_1\leq \alpha(P_q)=\lceil \frac{q}{2}\rceil$ we have $\alpha(P_q\diamond P_r)=|S|\leq\lceil \frac{q}{2}\rceil$, a contradiction. If $a_1=1$, then the same contradiction follows because we get $\alpha(P_q\diamond P_r)=|S|\leq\lceil \frac{r}{2}\rceil\leq\lceil \frac{q}{2}\rceil$. 

    If $t=2$, then one projection of $S$ (which is either $A_1 \cup A_2$ or $B_1 \cup B_2$) yields a complete bipartite graph $K_{2,1}$ or $K_{1,1}$ because $K_{2,1}$, $K_{1,1}$ and $K_1$ are the only possible complete multipartite subgraphs of the path. The other projection is an independent set of a path. In the case of $K_{1,1}$ we get a contradiction because $|S|\leq\lceil \frac{q}{2}\rceil$ in this case. For $K_{2,1}$ we obtain the set described in first paragraph of this proof and we have $\alpha(P_q\diamond P_r)=|S|\leq\lceil \frac{q}{2}\rceil+1$. 

    Let now $t\geq 3$ and let the sets $(A_1,\ldots,A_t)$ be ordered in non-increasing order of their cardinality. Assume first that $a_1>1$ and $a_2>1$. Thus $P_q[A_1\cup A_2]$ is not a complete bipartite subgraph because $K_{2,1}$ is a maximum complete bipartite subgraph of $P_q$. Hence, $A_1\cup A_2$ is an independent set of $G$ since ${\cal A}$ and ${\cal B}$ are friendly and consequently  $b\in B_1$ is adjacent to $b' \in B_2$ (note that $b_1=b_2=1$ as $a_1,a_2 > 1$). Let $i \in [t] \setminus \{1,2\}$. If $A_1 \cup A_i$ and $A_2 \cup A_i$ are independent sets, then  $B_1 \cup B_i$ and $B_2 \cup B_i$ induces a complete bipartite graph. Hence vertices $b \in B_1, b'\in B_2, b'' \in B_i$ induces $K_3$ in $P_r$, a contradiction. Thus, for any $i \in [t] \setminus \{1,2\}$, $A_1 \cup A_i$ or $A_2 \cup A_i$ induces a complete bipartite graph in $P_q$. Without loss of generality let $G[A_1 \cup A_3]$ be a complete bipartite graph. Then $a_1=2$ and $a_3=1$ and the vertex $x$ from $A_3$ is a common neighbor of both vertices from $A_1$. Since $x$ has no other neighbors in $P_q$, $A_2 \cup A_3$ is an independent set. From this we can easily deduce that $b_3=1$ and the $P_r$-quotient graph obtained from $\cal{B}$ on $B_1,B_2$ and $B_3$ is $P_3$ (in the same order $B_1B_2B_3$). Suppose that $t \geq 4$. Then since $A_1 \cup A_4$ is independent set of $P_q$, it follows that $A_2 \cup A_4$ must induce a complete bipartite graph. But then $a_4=1$ and $y \in A_4$ is a common neighbor of both vertices from $A_2$. Hence $A_1 \cup A_4$ and $A_3 \cup A_4$ are independent and thus since $\cal{A}$ and $\cal{B}$ are friendly, $B_4$ is adjacent to $B_1$ and $B_3$ in $H$-quotient graph obtained from $\cal{B}$. Moreover, $B_4$ is not adjacent to $B_2$ in this graph. Hence if we take one vertex from each of $B_1,\ldots ,B_4$, these four vertices induces $C_4$ in $P_q$, a contradiction. Hence $t=3$. Since $b_1=b_2=b_3=1$, $|S|=a_1+a_2+a_3$. Since $A_1 \cup A_2$ is an independent set of $P_q$ and $a_3=1$, we get $|S|\leq \alpha(P_q)+1=\lceil \frac{q}{2}\rceil+1$.
    
     Finally, assume that $a_1\geq 1$ and $a_i=1$ for any $i \in [t] \setminus \{1\}$. By symmetric argument we get, say $b_2\geq 1$ and $b_j=1$ for any $j \in [t]\setminus \{2\}$. Moreover, $a_1\leq 2$ or $b_2\leq 2$ again because $K_{1,2}$ is a maximum complete bipartite subgraph of a path and ${\cal A}$ and ${\cal B}$ are friendly. Thus let $A_i=\{x_i\}$ for any $i \in [t]\setminus \{1\}$ and let $B_i=\{y_i\}$ for any $i \in [t] \setminus \{2\}$. First note that since $P_q$-quotient graph obtained from $\cal{A}$ is isomorphic to the complement of $P_r$-quotient graph obtained from $\cal{B}$ (see Observation~\ref{quotient}), $t \leq 4$. If $a_i=1,b_i=1$ for any $i \in [t]$, then $|S|=t\leq 4 \leq \lceil \frac{q}{2}\rceil +1$ as $q \geq 5$. Hence we may without loss of generality assume that $a_1 \geq 2$. Now we distinguish two cases with respect to $a_1$. First assume that $a_1 \geq 3$. Since we know that at least one of $a_1,b_2$ is bounded by 2, we get that $b_2 \leq 2$. Since for any $i \geq 2$, $P_q[A_1 \cup A_i]$ cannot be a complete bipartite graph (note that the largest complete bipartite graphs in paths have order 3), it follows that $P_q[A_1 \cup A_i]$ is an independent set and since $\cal{A}$, $\cal{B}$ are friendly, $P_r[B_1 \cup B_i]$ is a complete bipartite graph $K_{1,1}$ or $K_{2,1}$. Suppose first that $b_2 =2$ (we already know that $b_i=1$ for $i \in [t]\setminus \{2\}$) which implies that $P_r[B_1 \cup B_2]$ is isomorphic to $K_{2,1}$. Denote $B_2=\{h,h'\}$. Then $B_1 \cup B_2=\{y_1,h,h'\}$ induces a star with center $y_1$. Now, since $A_1 \cup A_3$ is an independent set in $G$, it follows that $B_1 \cup B_3$ induces a complete bipartite graph $K_{1,1}$. Hence $y_1$ has three neighbors $h,h',y_3$ in $P_r$, a contradiction. Thus, $b_2=1$ and for any $i \in [t]\setminus \{1\}$ it holds that $B_1 \cup B_i$ induces a complete bipartite graph $K_{1,1}$. As the degree of $y_1$ in $P_r$ is at most 2, it follows that $t \leq 3$. Therefore $t=3$ because we are in case when $t \geq 3$. Since $b_1=b_2=b_3=1$, it follows that $|S|=|A_1|+|A_2|+|A_3|$. Moreover $A_1 \cup A_2$ is independent in $P_q$ and thus $|S|=|A_1|+|A_2|+a_3 \leq \alpha(P_q)+1=\lceil \frac{q}{2}\rceil+1$. In the same way we consider the case when $b_2 \geq 3$.

      At the end assume that $a_1=2$ and $b_2 \leq 2$. Denote $A_1=\{g,g'\}$. Assume first that there exists $i \in [t]\setminus \{1\}$ such that $P_q[A_1 \cup A_i]$ induces a complete bipartite graph (clearly $K_{2,1}$). Hence $B_1 \cup B_i$ is independent set. It follows that $A_1 \cup A_j$ and $A_i \cup A_j$ are independent sets for any $j \in [t] \setminus \{1,i\}$, because every vertex of $P_q$ has degree at most two. Since $\cal{A}$ and $\cal{B}$ are friendly, $B_1 \cup B_j$ and $B_i \cup B_j$ induces complete bipartite graphs in $P_r$. Since the vertices of $P_r$ have degrees at most 2, it follows that $b_i=b_j=1$ and $y_1,y_j,y_i$ induces a $P_3$ in $P_r$. If $t >3$, then the above argument holds for any $j_1,j_2 \neq 1,i$. Hence $y_1,y_{j_1},y_i$ and $y_1,y_{j_2},y_i$ induces a $P_3$ in $P_r$, which is clearly not possible. Hence $t=3$ and since $b_1=b_2=b_3=1=a_2=a_3$ and $a_1=2$, we get $|S|=a_1+a_2+a_3=4\leq \lceil \frac{q}{2}\rceil+1$. Finally, for any $i \in [t]\setminus \{1\}$ it holds that $A_1 \cup A_i$ is an independent set. Hence  $B_1 \cup B_i$ induces a complete multipartite graph $K_{1,1}$ or $K_{2,1}$ as $\cal{A}$ and $\cal{B}$ are friendly. Since $y_1$ has degree at most 2 in $P_r$ and since $t \geq 3$, we get that $t=3$ and $b_2=1$. Hence $|S|=a_1+a_2+a_3 \leq 4 \leq \lceil \frac{q}{2}\rceil+1.$     
\end{proof}

Note that the bound of Theorem~\ref{thm:lower1} is not sharp for $q=r=4$ because $\alpha(P_4 \diamond P_4)=4 \neq \lceil \frac{q}{2}\rceil +1 =3$, as $\{(v_1,v_2),(v_2,v_4),(v_3,v_1),(v_4,v_3)\}$ is an independent set of $P_4 \diamond P_4$ if $P_4=v_1v_2v_3v_4$. Despite this example, it seems that the lower bound of Theorem \ref{thm:lower1} is sharp for many modular products, whence the next problem.

\begin{problem}\label{problem2}
  Describe all pairs of graphs $G$ and $H$ with 
  $$\alpha(G \diamond H)=\max{\{\alpha(G)+\alpha_1(H)-1, \alpha(H)+\alpha_1(G)-1\}}.$$ 
\end{problem}

Now we return back to pairs of graphs $G,H$ for which $\alpha(G \diamond H)= \max{\{\alpha(G), \alpha(H)\}}$. Without loss of generality we may assume that graphs $G$ and $H$ are chosen such that $\alpha(G) \geq \alpha(H)$. 

\begin{proposition}\label{p:BoundCharacterization}
  Let $G$ and $H$ be graphs of order at least two with $\alpha(G) \geq \alpha(H)$. If $\alpha(G \diamond H)=\alpha(G)$, then $H$ is isomorphic to the disjoint union of at most $\alpha(G)$ complete graphs. 
\end{proposition}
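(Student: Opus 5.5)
We argue by contraposition, using the lower bound of Theorem~\ref{thm:lower1}. If $\alpha(G\diamond H)=\alpha(G)$, then Theorem~\ref{thm:lower1} (valid when $H$ and $G$ are non-complete) gives $\alpha(G)\ge \alpha(G)+\alpha_1(H)-1$, hence $\alpha_1(H)\le 1$. If $H$ is complete, then $H$ is a single complete graph and the claim holds trivially, since $1\le\alpha(G)$. If $G$ is complete, then $\alpha(G)=1\ge\alpha(H)$ forces $H$ to be complete as well, so we are again done. Hence we may assume that both factors are non-complete and conclude $\alpha_1(H)\le 1$.

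Next we show that $\alpha_1(H)\le 1$ forces $H$ to be a disjoint union of complete graphs. The condition says that for every vertex $x$ the neighborhood $N_H(x)$ induces a clique, i.e.\ $H$ has no induced $P_3$ (a vertex with two nonadjacent neighbours would give $\alpha(H[N_1(x)])\ge 2$). A graph without an induced $P_3$ has transitive adjacency within each component: if $u\sim v\sim w$ with $u\ne w$, then $u\sim w$. Thus every connected component is a clique. We give this standard argument briefly, for instance by noting that a shortest path of length two between nonadjacent vertices of one component is an induced $P_3$.

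Finally, the number of components of $H$ equals $\alpha(H)$, since choosing one vertex per clique gives a maximum independent set. Therefore the number of components is at most $\alpha(H)\le\alpha(G)$, as required.

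The only delicate point is the case handling for complete factors, because Theorem~\ref{thm:lower1} requires non-complete graphs. That case is covered by the separate argument in the first paragraph. The rest is routine.
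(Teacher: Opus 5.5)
Your proof is correct and follows essentially the same route as the paper: Theorem~\ref{thm:lower1} forces $\alpha_1(H)\le 1$, so every neighbourhood in $H$ is a clique, and therefore each component of $H$ is complete. Your version is slightly more careful than the paper's, since you explicitly handle the complete-factor cases excluded by the hypothesis of Theorem~\ref{thm:lower1} and justify the bound on the number of components via $\alpha(H)\le\alpha(G)$, both of which the paper leaves implicit; note also that your opening phrase ``by contraposition'' is a misnomer, since the argument you give is direct.
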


\begin{proof}
  Let $G$ and $H$ be graphs of order at least two with $\alpha(G) \geq \alpha(H)$ and let $\alpha(G \diamond H)=\alpha(G)$. By Theorem~\ref{thm:lower1} we get $\alpha(G) =\alpha(G \diamond H) \geq \alpha(G)-1+\alpha_1(H)$. Hence $\alpha_1(H) \leq 1$. If $\alpha_1(H)=0$, then $H$ is isomorphic to edgeless graph $N_{n(H)}$ and hence it is a disjoint union of $n(H)$ complete graphs $K_1$. If $\alpha_1(H)=1$, then for any $h \in V(H)$ it holds that $N[h]$ induces a clique. Hence any component of $H$ is a complete graph, which completes the proof.
\end{proof}

It is clear that necessary condition from Proposition~\ref{p:BoundCharacterization} is not always sufficient. Already sharp examples from Theorem \ref{setA} provide us with some examples. Note that $N_2$ is union of complete graphs and by Theorem \ref{setA} we have $$\alpha(K_{q,r}\diamond N_2)=q+r> \max{\{\alpha(K_{q,r}) +\alpha_1(N_2)-1, \alpha(N_2)+\alpha_1(K_{q,r})-1\}}=$$ $$=\max{\{q+0-1,2+q-1\}}=q+1,$$ whenever $r>1$. 
Thus the next problem seems meaningful.  

\begin{problem}\label{problem3}
  Let $G$ be a graph of order at least two, and let $H$ be a disjoint union of $k$ complete graphs, where $k \leq \alpha(G)$. Determine all values of $k$ for which $\alpha(G \diamond H)=\alpha(G)$.
\end{problem}

%\begin{problem}\label{problem3}
%  Describe all pairs of graphs $G$ and $H$ with $\alpha(G \diamond H)=\alpha(G)$. 
%\end{problem}

Now we return back to the structure of $\alpha(G \diamond H)$-sets obtained from Theorem~\ref{friends} as depicted on Figure~\ref{fig:independent}. To proceed, we recall the formula for computing the independence number of the modular product shown in Corollary~\ref{alpha}, i.e.\  

\[
\alpha(G \diamond H)
=
\max \left\{
\sum_{i=1}^{t} \bigl(|A_i|+|B_i|\bigr)-t
:
(\mathcal A,\mathcal B)\text{ are friendly and }
|\mathcal A|=|\mathcal B|=t
\right\}.
\]

Behind every friendly pair ${\cal A}$ and ${\cal B}$ we have some disjoint independent sets $\{A_1,\dots,A_k\}$ of $G$ (and $\{B_1,\dots,B_k\}$ of $H$). If this independent sets further partition $V(G)$, then we obtain a coloring of $G$. This is a framework for the next upper bound.

\begin{proposition}\label{c:bound2}
    For graphs $G$ and $H$ we have      
$$\alpha(G \diamond H) \leq n(G)+n(H)-\max\{\chi(G),\chi(H)\}.$$
\end{proposition}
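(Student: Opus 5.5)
Take an $\alpha(G\diamond H)$-set $S$. By Theorem~\ref{friends} it is generated by friendly sequences ${\cal A}=(A_1,\dots,A_t)$ and ${\cal B}=(B_1,\dots,B_t)$, so by Corollary~\ref{alpha}
\[
|S|=\sum_{i=1}^t(a_i+b_i)-t .
\]
By symmetry it suffices to show $|S|\le n(G)+n(H)-\chi(G)$; the same argument with $G$ and $H$ exchanged gives the bound with $\chi(H)$, and together they give the claim with the maximum.

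The plan is to extend ${\cal A}$ to a proper colouring of $G$. The sets $A_1,\dots,A_t$ are pairwise disjoint independent sets of $G$. Colour each vertex of $A_i$ with colour $i$, and give each of the $n(G)-\sum_i a_i$ remaining vertices its own new colour. This is a proper colouring with $t+n(G)-\sum_i a_i$ colours, so
\[
\chi(G)\le t+n(G)-\sum_{i=1}^t a_i ,
\qquad\text{that is}\qquad
\sum_{i=1}^t a_i - t\le n(G)-\chi(G).
\]

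It remains to bound the $B$-part. The sets $B_i$ are pairwise disjoint subsets of $V(H)$, so $\sum_i b_i\le n(H)$. Substituting both estimates,
\[
|S|=\Bigl(\sum_{i=1}^t a_i-t\Bigr)+\sum_{i=1}^t b_i\le n(G)-\chi(G)+n(H),
\]
which is the desired bound.

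There is no genuine obstacle. The only point needing care is that Theorem~\ref{friends} provides pairwise disjoint independent sets for both sequences simultaneously, which is exactly what the colouring step and the counting step use. Note that the proof never uses the second friendliness condition; disjointness and independence alone suffice.
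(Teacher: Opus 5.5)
Your proposal is correct and follows essentially the same route as the paper: take friendly sequences for an $\alpha(G\diamond H)$-set, extend $A_1,\dots,A_t$ to a proper colouring of $G$ by giving each remaining vertex its own colour to get $\sum_i a_i - t \le n(G)-\chi(G)$, bound $\sum_i b_i \le n(H)$, and finish by symmetry. Your remark that only disjointness and independence (plus $a_i=1$ or $b_i=1$ for the count $a_ib_i=a_i+b_i-1$) are used is accurate.
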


\begin{proof}
By Corollary~\ref{alpha}, there exist friendly sequences $(A_1,\ldots ,A_t)$ and $(B_1,\ldots ,B_t)$ of disjoint independent sets of $G$ and $H$, respectively, such that $\alpha(G \diamond H)=\sum_{i=1}^t(|A_i|+|B_i|)-t$ and $S=\bigcup_{i=1}^t (A_i \times B_i)$ being an $\alpha(G\diamond H)$-set. Denote $\sum_{i=1}^t|A_i|=n(G)-\ell$. Since $A_1,\ldots , A_t$ are disjoint independent sets of $G$, these sets correspond to coloring classes of $G[A_1\cup \ldots \cup A_t]$. To obtain proper coloring of whole graph $G$, we can color the remaining $\ell$ vertices of $G$, each with its own color. Hence $\chi(G) \leq t+\ell$ or equivalently $-t \leq -\chi(G)+\ell$. Thus, $$\alpha(G \diamond H)=\sum_{i=1}^t(|A_i|+|B_i|)-t=\sum_{i=1}^t|A_i|+\sum_{i=1}^t|B_i|-t$$
$$\leq n(G)-\ell +n(H) -\chi(G)+\ell =n(G)+n(H)-\chi(G).$$  
By symmetric arguments we obtain $\alpha(G \diamond H) \leq n(G)+n(H)-\chi(H)$ and the bound follows.
\end{proof}

Observe that above bound can be improved if $\chi(G)=\chi(H)=1$, that is if $G=N_p$ and $H=N_q$. By the structure of an independent set $S$ of $G\diamond H$ we can have whole, say $G$-layer in $S$, but then $S$ equals exactly this layer which gives $p<n(G)+n(H)-1=p+q-1$ vertices in $S$. Nevertheless, in this case it is easier to follow a direct approach which is straightforward:
$$\alpha(N_p\diamond N_q)=\alpha(K_p\times K_q)=\max\{p,q\}.$$
The bound given in Proposition~\ref{c:bound2} depends on a partition of $V(G)$ into independent sets. However, it is not necessary to consider a partition of the entire vertex set $V(G)$.  Instead, one may restrict attention to a collection of independent subsets $\{A_1,\ldots,A_t\}$ of $G$ satisfying $\sum_{i=1}^ta_i<n(G)$. This leads to the following result.

%\begin{proposition}\label{c:bound3}
 %   Let $G$ and $H$ be graphs. If ${\cal{A}}=\{A_1,\ldots ,A_t\}$ is a family of $t$ disjoint independent sets and $a_i=|A_i|$ for any $i \in [t]$, then      
%$$\alpha(G \diamond H) \leq n(H)+\max_{{\cal A},t}\Big\{\sum_{i=1}^t a_i-t\Big\}.$$
%\end{proposition}

\begin{proposition}\label{c:bound3}
    If $G$ and $H$ are graphs, then      
$$\alpha(G \diamond H) \leq n(H)+\max\Big\{\sum_{i=1}^t |A_i|-t:\, \{A_1,\ldots ,A_t\} \textrm{ is a family of disjoint independent sets of }G \Big\}.$$
\end{proposition}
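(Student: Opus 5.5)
By Corollary~\ref{alpha}, an $\alpha(G\diamond H)$-set $S$ is generated by friendly sequences $(A_1,\ldots,A_t)$ and $(B_1,\ldots,B_t)$. Moreover,
$$\alpha(G\diamond H)=\sum_{i=1}^t|A_i|+\sum_{i=1}^t|B_i|-t.$$
The plan is to bound the two parts of this expression separately.

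The $B$-part is easy. The sets $B_1,\ldots,B_t$ are pairwise disjoint subsets of $V(H)$, so
$$\sum_{i=1}^t|B_i|\le n(H).$$

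For the $A$-part, we use that the sets $A_1,\ldots,A_t$ are pairwise disjoint independent sets of $G$, as guaranteed by Theorem~\ref{friends}. Hence $\{A_1,\ldots,A_t\}$ is an admissible family in the maximum on the right-hand side of the statement. This gives
$$\sum_{i=1}^t|A_i|-t\le \max\Big\{\sum_{i=1}^{t'}|A'_i|-t' : \{A'_1,\ldots,A'_{t'}\}\text{ a family of disjoint independent sets of }G\Big\}.$$

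Adding the two inequalities yields the claimed bound. The maximum is over a finite set, since $G$ is finite, so it is well defined. One small point is whether the family must be nonempty. If $t=0$ is allowed, the maximum is at least $0$; in any case the family coming from $S$ has $t\ge 1$ whenever $S\neq\emptyset$, so the bound holds either way.

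There is no real obstacle. The only care needed is to note that the friendly condition (that $a_i=1$ or $b_i=1$, together with the bipartite and independence conditions) is simply discarded. Only the disjointness of the $B_i$ and the independence and disjointness of the $A_i$ are used.
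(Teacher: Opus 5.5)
Your proof is correct and is essentially the paper's argument: the paper also takes the friendly sequences from Corollary~\ref{alpha} that realize $\alpha(G\diamond H)$, bounds $\sum_{i=1}^t|B_i|$ by $n(H)$ using disjointness, and notes that $\{A_1,\ldots,A_t\}$ is an admissible family for the maximum. As you observe, the friendliness conditions play no role beyond guaranteeing that the sets are disjoint and independent.
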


\begin{proof}
    By Corollary~\ref{alpha}, there exist friendly sequences $(A_1,\ldots ,A_t)$, $(B_1,\ldots ,B_t)$ of disjoint independent sets of $G$ and $H$, respectively such that 
    $$\alpha(G \diamond H)=\sum_{i=1}^t(|A_i|+|B_i|)-t \leq \max_{{\cal A},k}\Big\{\sum_{i=1}^k |A_i|+\sum_{i=1}^k|B_i|-k\Big\} \leq$$
    $$\leq \max_{{\cal A},k}\Big\{\sum_{i=1}^k |A_i| +n(H)-k\Big\}=n(H)+\max_{{\cal A},k}\Big\{\sum_{i=1}^k|A_i|-k\Big\}.$$
\end{proof}

Next we show that the bounds of Propositions \ref{c:bound2} and \ref{c:bound3} are sharp for a special case for $\max\{\chi(G),\chi(H)\}=2$ and $t=2$, respectively.

\begin{corollary}\label{c:sumBound}
    For any graphs $G$ and $H$ of order at least two we have 
    $$\alpha(G \diamond H) \leq n(G)+n(H)-2$$ 
    and the bound is sharp if and only if one factor is $K_{\ell,1}$ and the other is $N_k$ or one factor is $K_{p,q}$ and the other $N_2$ or one factor is $N_k$ and the other $N_2$. 
\end{corollary}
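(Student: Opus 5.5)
The upper bound comes from Proposition~\ref{c:bound2}, and the characterization of equality from an exact accounting of the friendly sequences given by Corollary~\ref{alpha}.

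\emph{Upper bound.} If $\max\{\chi(G),\chi(H)\}\ge 2$, Proposition~\ref{c:bound2} gives $\alpha(G\diamond H)\le n(G)+n(H)-2$ directly. Otherwise both factors are edgeless, say $G=N_p$ and $H=N_q$ with $p,q\ge 2$. Then the direct computation $\alpha(N_p\diamond N_q)=\max\{p,q\}$ gives
\[
\max\{p,q\}\le p+q-2,
\]
since $\min\{p,q\}\ge 2$.

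\emph{Equality: reduction to $t\le 2$.} Take friendly sequences $(A_1,\dots,A_t)$ and $(B_1,\dots,B_t)$ that realize $\alpha(G\diamond H)$ as in Corollary~\ref{alpha}. Put $\ell_G=n(G)-\sum_i a_i\ge 0$ and $\ell_H=n(H)-\sum_i b_i\ge 0$. Then
\[
|S|=n(G)+n(H)-\ell_G-\ell_H-t .
\]
So equality holds exactly when $\ell_G+\ell_H+t=2$ for some optimal pair. Since $t\ge 1$, only two cases remain.

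\emph{Case $t=1$.} Here $\ell_G+\ell_H=1$. By symmetry let $b_1=1$; then $\ell_H=n(H)-1\ge 1$. This forces $n(H)=2$ and $\ell_G=0$, so $A_1=V(G)$ is independent and $G=N_k$. Hence $H$ is either $N_2$ or $K_2=K_{1,1}$. These are the pairs ``$N_k$ and $N_2$'' and ``$K_{\ell,1}$ (with $\ell=1$) and $N_k$''.

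\emph{Case $t=2$.} Here $\ell_G=\ell_H=0$, so $\{A_1,A_2\}$ partitions $V(G)$ and $\{B_1,B_2\}$ partitions $V(H)$. Friendliness means, up to swapping $G$ and $H$, that $G=K_{a_1,a_2}$ and $B_1\cup B_2=V(H)$ is independent, so $H=N_{b_1+b_2}$. Each index also needs $a_i=1$ or $b_i=1$, which gives three subcases:
\begin{itemize}
\item $b_1=b_2=1$ yields $K_{p,q}$ with $N_2$;
\item $a_1=a_2=1$ yields $K_{1,1}$ with $N_k$;
\item the mixed choice $a_2=1=b_1$ yields $K_{a_1,1}$ with $N_{1+b_2}$, i.e.\ $K_{\ell,1}$ with $N_k$.
\end{itemize}
This proves necessity.

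\emph{Sufficiency.} We exhibit independent sets of size $n(G)+n(H)-2$ via Theorem~\ref{friends}. Each case uses a friendly pair with $t\le 2$ and $\ell_G=\ell_H=0$.
\begin{itemize}
\item For $N_k$ and $N_2$: take $t=1$, $A_1=V(N_k)$ and a single vertex of $N_2$, giving $k=k+2-2$.
\item For $K_{p,q}$ and $N_2$: Theorem~\ref{setA} gives $p+q$.
\item For $K_{\ell,1}$ and $N_k$: take $A_1=$ the $\ell$ leaves, $A_2=\{c\}$ the center, $B_1=\{y\}$ and $B_2=V(N_k)\setminus\{y\}$. The sets $A_1\cup A_2$ induce $K_{\ell,1}$ and $B_1\cup B_2$ is independent, so the pair is friendly and yields $\ell+1+k-2$.
\end{itemize}

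The main point needing care is the bookkeeping identity $|S|=n(G)+n(H)-\ell_G-\ell_H-t$. In particular, the case $t=1$ must correctly account for the forced singleton coordinate; every remaining step is a routine check.
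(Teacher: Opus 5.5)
Your proof is correct and is essentially the paper's argument. Both take an optimal friendly pair from Theorem~\ref{friends} and count $\sum_i a_i+\sum_i b_i-t$; your exact slack identity $\ell_G+\ell_H+t=2$ replaces the paper's case split on whether some factor has order two, and for the upper bound you use Proposition~\ref{c:bound2} plus the edgeless case instead of the paper's direct $t=1$ versus $t\ge 2$ count.

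There are two harmless slips in the sufficiency part. First, for $N_k$ and $N_2$ your pair has $t=1$ and $\ell_H=1$, not $\ell_G=\ell_H=0$ as your summary sentence claims. Second, Theorem~\ref{setA} assumes non-complete factors, so $K_{1,1}$ with $N_2$ should instead be covered by the explicit friendly pair $(V_1,V_2)$, $(\{h\},\{h'\})$, or by your $K_{\ell,1}$ case with $\ell=1$, $k=2$.
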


\begin{proof}
To prove the bound, let ${\cal{A}}=(A_1,\ldots ,A_t)$, and ${\cal{B}}=(B_1,\ldots ,B_t)$ be friendly sequences of disjoint independent sets of $G$ and $H$, respectively, such that $\alpha(G \diamond H)=\sum_{i=1}^t(|A_i|+|B_i|)-t$, which exist by Corollary~\ref{alpha}. Let $S=\bigcup_{i=1}^t A_i \times B_i$ be the corresponding $\alpha(G\diamond H)$-set. If $t=1$, then since $\cal{A}$ and $\cal{B}$ are friendly, it follows that either $a_i=1$, or $b_i=1$ and thus  $|S| \leq \max{\{a_1,b_1\}}\leq \max{\{\alpha(G),\alpha(H)\}} \leq n(G)+n(H)-2.$ For $t \geq 2$, $\alpha(G \diamond H)=\sum_{i=1}^t(|A_i|+|B_i|)-t \leq n(G)+n(H)-2.$

For the sharpness, let $\ell$ and $k$ be arbitrary positive integers and let $G=K_{\ell,1}$ and $H=N_k$ where $g$ is the center and $\{g_1,\dots,g_{\ell}\}$ the leaves of $K_{\ell,1}$ and $V(H)=\{h_1,\dots,h_k\}$. Set $S=\{(g,h_i),(g_j,h_k):i\in [k-1],j\in[\ell]\}$ is clearly an independent set of $G\diamond H$ of cardinality $\ell+k-1=n(G)+n(H)-2$ and the bound is sharp. 

The set $S=(V_1\times\{h\})\cup(V_2\times\{h'\})$ is an independent set of $K_{p,q}\diamond N_2$ where $V_1$ and $V_2$ form the bipartition of $K_{p,q}$ and $H=N_2$ with $V(H)=\{h,h'\}$. Clearly, $|S|=|V_1|+|V_2|=p+q=n(K_{p,q})+n(N_2)-2$ and the bound is sharp again. 

Finally, for any integer $k \geq 2$, $\alpha(N_k \diamond N_2)=k = n(N_k)+n(N_2)-2.$

Conversely, assume that $\alpha(G \diamond H)=n(G)+n(H)-2$ and let $S$ be an $\alpha(G \diamond H)$-set. Let $\cal{A}$ and $\cal{B}$ be the friendly sequences obtained from $S$, as in Theorem~\ref{friends}. Let first $V(H)=\{h,h'\}$ and $\alpha(G \diamond H)=n(G)$. If $S$ equals to one $G$-layer, then $G=N_{n(G)}$ and $H$ can be arbitrary, that is either $H=K_2=K_{1,1}$ or $H=N_2$. So, assume that $S$ contains $p$ vertices of one and $q=n(G)-p$ vertices of the other $G$-layer, in which case $\cal{A}$ and $\cal{B}$ have length 2. Let by $A_1=p_G(S^h)$ and $A_2=p_G(S^{h'})$. Since $\cal{A}$ and $\cal{B}$ are friendly, $A_1$ and $A_2$ are disjoint independent set. If $H=K_2$, then there are no edges between vertices of $A_1$ and $A_2$ as $\cal{A}$ and $\cal{B}$ are friendly and $G=N_{n(G)}$. If $H=N_2$, then there are all edges between vertices of $A_1$ and $A_2$ and thus $G=K_{p,q}$. We get symmetric results when $|V(G)|=2$. 

We are left with the case where $|V(G)|>2$ and $|V(H)|>2$. By Theorem~\ref{friends} we have friendly sequences ${\cal A}=(A_1,\dots,A_t)$ and ${\cal B}=(B_1,\dots,B_t)$ such that $|S|=\bigcap_{i=1}^t A_i \times B_i$. If $t>2$, then we have a contradiction with Proposition \ref{c:bound3} because $\alpha(G \diamond H)=n(G)+n(H)-2$. 
Hence $t \leq 2$. If $t=1$, then $|S|=\max{\{a_1,b_1\}} \leq \max{\{n(G),n(H)\}} < n(G)+n(H)-2$ (as $n(G),n(H) \geq 3$), a contradiction. Thus, $t=2$. Without loss of generality let $b_1=1$. If $b_2=1$, then $|S|=a_1+a_2 \leq n(G) < n(G)+n(H)-2$, a contradiction. Thus $b_2 \geq 2$ and because $\cal{A}$ and $\cal{B}$ are friendly, $a_2=1.$ Hence $|S|=a_1+b_2$. Since $|S|=n(G)+n(H)-2$ and since $a_1\leq n(G)-1$ and $b_2\leq n(H)-1$, it follows that $a_1=n(G)-1>1$, $a_2=1$, $b_1=1$ and $b_2=n(H)-1>1$. If there exists an edge between a vertex from $A_1$ and the vertex from $A_2$, then since $\cal{A}$ and $\cal{B}$ are friendly, 
\begin{itemize}
\item there are all possible edges between $A_1$ and $A_2$ and thus $G=K_{n(G)-1,1}$,
\item there are no edges between $B_1$ and $B_2$ and hence $H=N_{n(H)}$. 
\end{itemize}

Otherwise, there are no edges between $A_1$ and $A_2$ and $G=N_{n(G)}$. Since $\cal{A}$ and $\cal{B}$ are friendly, there must exist all possible edges between $B_1$ and $B_2$ and thus $H=K_{n(H)-1,1}$.
\end{proof}

%%%%%%%%%%%%%%%%%%%%%%%%%%%%%%%%%%%%%%%%%%%%%%%%%%%%%%%%%

\section{Additional structure of independent sets of modular products}\label{sec:4}

The structure of independent sets of modular products of graphs was already described in Theorem \ref{friends}. In this section we join some sets of ${\cal A}$ and ${\cal B}$ of Theorem \ref{friends} and observe that every maximal independent set of modular product is build by some blocks of two different types (plus their commutative versions) that were already implicitly used in Theorems \ref{setA} and \ref{thm:lower1}. 

The first can be found in $K_{p_1,\dots,p_t}\diamond N_t$ and is 
$\cup_{i=1}^t(V_i\times \{h_i\})$ where $V_1,\dots,V_t$ is the partition of $V(K_{p_1,\dots,p_t})$ and $V(N_t)=\{h_1,\dots,h_t\}$. We call this set a type A set and its commutative version in $N_t\diamond K_{p_1,\dots,p_t}$ a type B set. 

The second is a subset of vertices of $K_{t,1}\diamond N_k$ and is $(\{g\}\times (V(N_k)\setminus\{h\}))\cup((V(K_{t,1})\setminus\{g\})\times \{h\})$ where $g$ is the center of $K_{t,1}$ and $h$ and arbitrary vertex from $N_k$. We call this set a type C set and its commutative version in $N_k\diamond K_{t,1}$ a type D set. 

Let $G$ and $H$ be graphs, and let $\mathcal G=\{G_1,\dots,G_k\}, \mathcal H=\{H_1,\dots,H_k\}$ be the sets of vertex disjoint induced subgraphs of $G$ and $H$, respectively. Assume that for each $i\in[k]$,
\begin{itemize}
    \item $G_i,H_i\in {\cal{F}}=\{K_{p_1,\dots,p_t},\,K_{1,t},\,N_t : t\in\mathbb N\}$, $i\in[k]$, and
    \item the modular product $G_i\diamond H_i$ yields a type A, a type B, a type C or a type D set $J_i$.
\end{itemize}
We say that sets $J_1,\dots,J_k$ are \emph{compatible} in $G\diamond H$ if $J_1\cup\cdots\cup J_k$ is an independent set of $G\diamond H$. On Figure \ref{fig1} observe graphs $G_1\cong N_2$ (red vertices), $H_1\cong K_2$ (green vertices), $G_2\cong N_2$ (blue vertices) and $H_2\cong K_{2,1}$ (yellow vertices) where $J_1$ and $J_2$ (black vertices) are compatible.

\begin{figure}[ht!]
\begin{center}
\begin{tikzpicture}[scale=1,style=thick,x=1cm,y=1cm]
\def\vr{4pt} % \vr = vertex radius;

% define vertices
%%%%%
%%%%%
\path (0,0) coordinate (a1);
\path (0,2) coordinate (a2);
\path (0,4) coordinate (a3);
\path (0,6) coordinate (a4);
\path (0,8) coordinate (a5);
\path (2,0) coordinate (b1);
\path (2,2) coordinate (b2);
\path (2,4) coordinate (b3);
\path (2,6) coordinate (b4);
\path (2,8) coordinate (b5);
\path (4,0) coordinate (c1);
\path (4,2) coordinate (c2);
\path (4,4) coordinate (c3);
\path (4,6) coordinate (c4);
\path (4,8) coordinate (c5);
\path (6,0) coordinate (d1);
\path (6,2) coordinate (d2);
\path (6,4) coordinate (d3);
\path (6,6) coordinate (d4);
\path (6,8) coordinate (d5);
\path (8,0) coordinate (e1);
\path (8,2) coordinate (e2);
\path (8,4) coordinate (e3);
\path (8,6) coordinate (e4);
\path (8,8) coordinate (e5);

\path (2,-2) coordinate (b);
\path (4,-2) coordinate (c);
\path (6,-2) coordinate (d);
\path (8,-2) coordinate (e);

\path (11,2) coordinate (x);
\path (14,2) coordinate (y);
\path (11,6) coordinate (u);
\path (14,6) coordinate (v);

%  edges
\draw (b) -- (c) -- (d) -- (e);
\draw (a1) -- (a2) -- (a3) -- (a4) -- (a5);
\draw (a2) to [bend left] (a5);
\draw (b1) -- (c1) -- (d1) -- (e1);
\draw (b1) -- (b2) -- (b3) -- (b4) -- (b5);
\draw (b2) to [bend left] (b5);
\draw (b2) -- (c2) -- (d2) -- (e2);
\draw (c1) -- (c2) -- (c3) -- (c4) -- (c5);
\draw (c2) to [bend left] (c5);
\draw (b3) -- (c3) -- (d3) -- (e3);
\draw (d1) -- (d2) -- (d3) -- (d4) -- (d5);
\draw (d2) to [bend left] (d5);
\draw (b4) -- (c4) -- (d4) -- (e4);
\draw (e1) -- (e2) -- (e3) -- (e4) -- (e5);
\draw (e2) to [bend left] (e5);
\draw (b5) -- (c5) -- (d5) -- (e5);
\draw[blue] (x)--(u) -- (y)--(v);
\draw[red] (v)--(x);

\draw (b1) -- (c2) -- (d1) -- (e2);
\draw (b2) -- (c1) -- (d2) -- (e1);
\draw (b3) -- (c2) -- (d3) -- (e2);
\draw (b2) -- (c3) -- (d2) -- (e3);
\draw (b3) -- (c4) -- (d3) -- (e4);
\draw (b4) -- (c3) -- (d4) -- (e3);
\draw (b4) -- (c5) -- (d4) -- (e5);
\draw (b5) -- (c4) -- (d5) -- (e4);
\draw (b2) -- (c5) -- (d2) -- (e5);
\draw (b5) -- (c2) -- (d5) -- (e2);
\draw (b1) -- (d4);
\draw (b4) -- (d1);
\draw (c1) -- (e4);
\draw (c4) -- (e1);

\draw (b1) to [bend left] (d3);
\draw (b3) to [bend left] (d1);
\draw (c1) to [bend left] (e3);
\draw (c3) to [bend left] (e1);
\draw (b1) .. controls (0,3.5) and (0.3,1).. (d5);
\draw (c1) .. controls (11,3.5) and (11.3,1).. (e5);
\draw (b5) .. controls (1,3.5) and (0,1).. (d1);
\draw (c5) .. controls (10,3.5) and (10,1).. (e1);
\draw (b2) to [bend left] (d4);
\draw (b4) to [bend left] (d2);
\draw (c2) to [bend left] (e4);
\draw (c4) to [bend left] (e2);
\draw (b3) to [bend left] (d5);
\draw (b5) to [bend left] (d3);
\draw (c3) to [bend left] (e5);
\draw (c5) to [bend left] (e3);

\draw (b2) to (e4);
\draw (b4) to (e2);
\draw (b1) to (e3);
\draw (b3) to (e1);
\draw (b3) to (e5);
\draw (b5) to (e3);

%\draw (c) -- (f);

\draw (a1) [fill=green] circle (\vr); \draw (a2) [fill=green] circle (\vr); \draw (a3) [fill=yellow] circle (\vr);
\draw (a4) [fill=yellow] circle (\vr); \draw (a5) [fill=yellow] circle (\vr);
\draw (b1) [fill=white] circle (\vr); \draw (b2) [fill=black] circle (\vr); \draw (b3) [fill=white] circle (\vr);
\draw (b4) [fill=white] circle (\vr); \draw (b5) [fill=white] circle (\vr);
\draw (c1) [fill=white] circle (\vr); \draw (c2) [fill=white] circle (\vr); \draw (c3) [fill=white] circle (\vr);
\draw (c4) [fill=black] circle (\vr); \draw (c5) [fill=white] circle (\vr);
\draw (d1) [fill=black] circle (\vr); \draw (d2) [fill=white] circle (\vr); \draw (d3) [fill=white] circle (\vr);
\draw (d4) [fill=white] circle (\vr); \draw (d5) [fill=white] circle (\vr);
\draw (e1) [fill=white] circle (\vr); \draw (e2) [fill=white] circle (\vr); \draw (e3) [fill=black] circle (\vr);
\draw (e4) [fill=white] circle (\vr); \draw (e5) [fill=black] circle (\vr);
\draw (b) [fill=red] circle (\vr);
\draw (c) [fill=blue] circle (\vr);
\draw (d) [fill=red] circle (\vr);
\draw (e) [fill=blue] circle (\vr);

\draw (x) [fill=white] circle (\vr);
\draw (y) [fill=white] circle (\vr);
\draw (u) [fill=white] circle (\vr);
\draw (v) [fill=white] circle (\vr);

\draw[anchor = north] (b) node {$g_1$};
\draw[anchor = north] (c) node {$g_2$};
\draw[anchor = north] (d) node {$g_3$};
\draw[anchor = north] (e) node {$g_4$};
\draw[anchor = east] (a1) node {$h_2$};
\draw[anchor = east] (a2) node {$h_1$};
\draw[anchor = east] (a3) node {$h_3$};
\draw[anchor = east] (a4) node {$h_4$};
\draw[anchor = east] (a5) node {$h_5$};
\draw[anchor = north] (x) node {$\{g_1\}=\{h_1\}$};
\draw[anchor = north] (y) node {$\{g_3\}=\{h_2\}$};
\draw[anchor = south] (u) node {$\{g_2\}=\{h_4\}$};
\draw[anchor = south] (v) node {$\{g_4\}=\{h_3,h_5\}$};
\draw (12.5,0) node {$GH$};

\draw (-1.5,4) node {$H$};
\draw (5,-3) node {$G$};

\end{tikzpicture}
\end{center}
\caption{Left: independent set of $G\diamond H$ represent black vertices, subgraphs $G_1\cong N_2$ with vertices $g_1,g_3$ (red vertices) and $G_2\cong N_2$ with vertices $g_2,g_4$ (blue vertices), $H_1\cong K_2$ with vertices $h_1, h_2$ (green vertices) and $H_2\cong K_{1,2}$ with vertices $h_3,h_4,h_5$ (yellow vertices). Right: $(GH)_{\mathcal{GH}}$, where ${\mathcal{G}}=\{N_2,N_2\}$ and ${\mathcal{H}}=\{K_2,K_{1,2}\}$.} \label{fig1}
\end{figure}

%%%%%%%%%%%%%%%%%%%%%%%%%%%%%%%%%%%%%%%%%%%%%%%%%%%%%%%%%%%%%%%%%%%%%%%%%%%%%%%%%
%%%%%%%%%%%%%%%%%%%%%%%%%%%%%%%%%%%%%%%%%%%%%%%%%%%%%%%%%%%%%%%%%%%%%%%%%%%%%%%%%

Next we define the \emph{partition graph} $(GH)_{\mathcal{GH}}$ of graphs $G$ and $H$ as follows.
The vertices of $(GH)_{\mathcal{GH}}$ are \emph{blocks}, i.e.\ certain subsets of
$V(G)$ and $V(H)$. Formally,
\[
V\bigl((GH)_{\mathcal{GH}}\bigr)=\bigcup_{i=1}^k \mathcal W_i,
\]
where the families $\mathcal W_1,\dots,\mathcal W_k$ are pairwise disjoint.
Each block $X\in V((GH)_{\mathcal{GH}})$ is equipped with two representations $\phi_G(X)\subseteq V(G)$ and $\phi_H(X)\subseteq V(H)$,
called the \emph{$G$-representation} and \emph{$H$-representation} of $X$, respectively. For fix $i\in[k]$, the family $\mathcal W_i$ is defined according to the type of $J_i$.

\begin{itemize}
\item[\textup{(A)}]
If $J_i$ is of type $A$, or equivalently if $G_i\cong K_{p_1,\dots,p_s}$ and $H_i\cong N_s$, then let $\{V_{i,1},\dots,V_{i,s}\}$ be the partition of $V(G_i)$ and
$V(H_i)=\{h_{i,1},\dots,h_{i,s}\}$.
Define $\mathcal W_i:=\{X_{i,1},\dots,X_{i,s}\}$, where for each $j\in[s]$,
$\phi_G(X_{i,j})=V_{i,j}, \phi_H(X_{i,j})=\{h_{i,j}\}$.

\item[\textup{(B)}]
If $J_i$ is of type $B$ ($G_i\cong N_s$ and $H_i\cong K_{p_1,\dots,p_s}$), then
let $V(G_i)=\{g_{i,1},\dots,g_{i,s}\}$ and let
$\{V_{i,1},\dots,V_{i,s}\}$ be the partition of $V(H_i)$.
Define $\mathcal W_i:=\{X_{i,1},\dots,X_{i,s}\}$,
where for each $j\in[s]$, $\phi_G(X_{i,j})=\{g_{i,j}\},
\phi_H(X_{i,j})=V_{i,j}$.

\item[\textup{(C)}]
If $J_i$ is of type $C$ ($G_i\cong K_{r,1}$ and $H_i\cong N_s$), then
let $g_i$ be the center of $G_i$ and $L_i$ the set of its leaves, and let
$V(H_i)=\{h_{i,1},\dots,h_{i,s}\}$.
Define $\mathcal W_i:=\{X_{i,1},X_{i,2}\}$, where $\phi_G(X_{i,1})=\{g_i\}, 
\phi_H(X_{i,1})=\{h_{i,1},\dots,h_{i,s-1}\}$, and $\phi_G(X_{i,2})=L_i, \phi_H(X_{i,2})=\{h_{i,s}\}$.

\item[\textup{(D)}]
If $J_i$ is of type $D$ ($G_i\cong N_s$ and $H_i\cong K_{r,1}$), then
let $h_i$ be the center of $H_i$ and $L_i$ the set of its leaves, and let
$V(G_i)=\{g_{i,1},\dots,g_{i,s}\}$.
Define $\mathcal W_i:=\{X_{i,1},X_{i,2}\}$, where $\phi_G(X_{i,1})=\{g_{i,1},\dots,g_{i,s-1}\}, \phi_H(X_{i,1})=\{h_i\}$, and $\phi_G(X_{i,2})=\{g_{i,s}\}, \phi_H(X_{i,2})=L_i$.
\end{itemize}

It remains to define the edges of the partition graph $(GH)_{\mathcal{GH}}$.
Let $X\in\mathcal W_i$ and $Y\in\mathcal W_j$ with $i\neq j$.
We put $XY\in E((GH)_{\mathcal{GH}})$ if and only if exactly one of the following holds:
\begin{enumerate}
\item every vertex of $\phi_G(X)$ is adjacent in $G$ to every vertex of $\phi_G(Y)$
and no vertex of $\phi_H(X)$ is adjacent in $H$ to any vertex of $\phi_H(Y)$;
\item every vertex of $\phi_H(X)$ is adjacent in $H$ to every vertex of $\phi_H(Y)$
and no vertex of $\phi_G(X)$ is adjacent in $G$ to any vertex of $\phi_G(Y)$.
\end{enumerate}
No edges are present between vertices belonging to the same family $\mathcal W_i$. See the right side of Figure \ref{fig1} for $(GH)_{\cal{GH}}$ where blue edges represent appropriate edges in $G$ and non-edges in $H$, while the red edge represents appropriate edges in $H$ and no-edges in $G$.

\begin{theorem}\label{compatible}
Let $G$ and $H$ be two graphs with their appropriate families of disjoint induced subgraphs ${\cal{G}}=\{G_1,\dots,G_k\}, {\cal{H}}=\{H_1,\dots,H_k\}$,
with $G_i,H_i\in\{K_{p_1,\dots,p_t},K_{1,t},N_t: t \in {\mathbb{N}}\}$ for every $i\in[k]$ such that $J_i$ is an independent set of $G_i \diamond H_i$ either of type $A,B,C$ or $D$ for $i \in [k]$. Independent sets $J_1,\dots,J_k$ are compatible if and only if $(GH)_{\mathcal{GH}}$ is a complete $k$-partite graph.
\end{theorem}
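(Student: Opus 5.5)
The plan is to reduce compatibility to a pairwise condition on blocks and then use the friendliness machinery of Theorem~\ref{friends}. First observe that each $J_i$ is itself a union of products of block representations: $J_i=\bigcup_{X\in\mathcal W_i}\phi_G(X)\times\phi_H(X)$. This is immediate from the definitions of types A--D. For example, in type C the set is $(\{g_i\}\times\{h_{i,1},\dots,h_{i,s-1}\})\cup(L_i\times\{h_{i,s}\})$. Hence $J=J_1\cup\dots\cup J_k=\bigcup_{X\in V((GH)_{\mathcal{GH}})}\phi_G(X)\times\phi_H(X)$. Each $J_i$ is independent by hypothesis, so $J$ is independent if and only if for every $i\ne j$, every $X\in\mathcal W_i$ and every $Y\in\mathcal W_j$, there is no edge of $G\diamond H$ between $\phi_G(X)\times\phi_H(X)$ and $\phi_G(Y)\times\phi_H(Y)$.

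The key lemma is the following. For blocks $X,Y$ from different families, there is no edge between $\phi_G(X)\times\phi_H(X)$ and $\phi_G(Y)\times\phi_H(Y)$ if and only if exactly one of conditions 1, 2 in the definition of edges of $(GH)_{\mathcal{GH}}$ holds, i.e.\ if and only if $XY$ is an edge.

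\emph{If direction.} Since the $G_i$ are vertex disjoint, $\phi_G(X)\cap\phi_G(Y)=\emptyset$, and likewise $\phi_H(X)\cap\phi_H(Y)=\emptyset$. So any pair $(g,h),(g',h')$ with $g\in\phi_G(X)$, $g'\in\phi_G(Y)$ differs in both coordinates, and only direct or co-direct edges are possible. Under condition 1, each such pair has $gg'\in E(G)$ and $hh'\notin E(H)$, so no edge. Condition 2 is symmetric.

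\emph{Only if direction.} Suppose there are no edges. For any $g,g',h,h'$ as above, we must have exactly one of $gg'\in E(G)$, $hh'\in E(H)$. Fix $h,h'$. If some pair $g,g'$ is adjacent and another is not, we get a contradiction, since $hh'$ cannot be both adjacent and nonadjacent. So the bipartite adjacency between $\phi_G(X)$ and $\phi_G(Y)$ is all or nothing. Symmetrically, the adjacency between $\phi_H(X)$ and $\phi_H(Y)$ is all or nothing, and by the "exactly one" rule the two choices are complementary. This gives exactly condition 1 or condition 2. This is essentially the argument in the proof of Theorem~\ref{friends}.

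\emph{Conclusion.} With the lemma, $J$ is independent if and only if every pair of blocks from different families is adjacent in $(GH)_{\mathcal{GH}}$. By definition there are no edges inside a family $\mathcal W_i$. So this holds if and only if $(GH)_{\mathcal{GH}}$ is complete multipartite with parts exactly $\mathcal W_1,\dots,\mathcal W_k$, i.e.\ a complete $k$-partite graph.

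The main obstacle is a subtlety in the statement. A complete $k$-partite structure with a different partition could a priori arise, for instance if some $\mathcal W_i$ were split. I would therefore interpret the statement as "complete $k$-partite with parts $\mathcal W_1,\dots,\mathcal W_k$". One can argue this is forced: each $\mathcal W_i$ is independent in $(GH)_{\mathcal{GH}}$ and has at least two blocks in all four types whenever the parameters are nontrivial, but I would state the intended reading explicitly. I would also check the degenerate case where some representation is empty, e.g.\ type C with $s=1$; I would exclude such blocks or note that they contribute no vertices.
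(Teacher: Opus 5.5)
Your proposal is correct and follows essentially the paper's route: the \emph{if} half of your pairwise lemma is the paper's converse argument (disjoint representations leave only direct or co-direct edges, which conditions 1 and 2 rule out), and the \emph{only if} half is the all-or-nothing argument that the paper obtains through friendliness of the concatenated sequences (Theorem~\ref{friends}) and Observation~\ref{quotient}. Your caveat about empty representations (types C and D with $s=1$) points to a genuine degenerate case that the paper passes over. Your worry about the partition, however, is unnecessary: each $\mathcal W_i$ is independent in $(GH)_{\mathcal{GH}}$, so it must lie inside a single part of any complete multipartite structure, and having exactly $k$ nonempty parts then forces the parts to be $\mathcal W_1,\dots,\mathcal W_k$, regardless of how many blocks each family has.
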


\begin{proof}
Let ${\mathcal{G}}=\{G_1,\ldots , G_k\}$ and ${\cal{H}}=\{H_1,\ldots ,H_k\}$ be appropriate vertex disjoint subgraphs of $G$ and $H$, respectively ($G_i,H_i\in\{K_{p_1,\dots,p_t},K_{1,t},N_t; t \in {\mathbb{N}}\}$). Moreover, for any $i \in [k]$, let $J_i$ be an independent set of $G_i \diamond H_i$ of the type $A,B,C$ or $D$, depending on the structure of $G_i$ and $H_i$. Since for any $i \in [k]$, $J_i$ is an independent set of $G_i \diamond H_i$, Theorem~\ref{friends}
 implies that there exist friendly sequences ${\cal{A}}_i=(A_{i,1},\ldots , A_{i,{n_i}})$ and ${\cal{B}}_i=(B_{i,1},\ldots , B_{i,{n_i}})$ of $G_i$ and $H_i$, respectively.
 
Assume first that $J_1,\ldots , J_k$ are compatible. Hence $J=J_1\cup \cdots \cup J_k$ is an independent set of $G \diamond H$. Hence ${\cal{A}}=(A_{1,1},\ldots , A_{1,n_1},\ldots ,A_{k,1},\ldots , A_{k,n_k})$  and ${\cal{B}}=(B_{1,1},\ldots , B_{1,n_1},\ldots ,B_{k,1},\ldots , B_{k,n_k})$ are friendly sequences of $G$ and $H$, respectively. Moreover, Observation~\ref{quotient} implies that $G$-quotient graph $G_J$ obtained from $\cal{A}$ is isomorphic to the complement of the $H$-quotient graph $H_J$ obtained from $\cal{B}$. Now, consider the partition graph $(GH)_{\mathcal{GH}}$. By the definition of the partition graph, it follows that $V((GH)_{\mathcal{GH}})=\bigcup_{i\in [k]}\{X_{i,1},\ldots ,X_{i,n_i}\}$, where the $G$-representation of $X_{i,j}$ is $A_{i,j}$ and its $H$-representation is $B_{i,j}$ for any $i \in [k], j \in [n_i]$. Moreover, the definition of the representation graph implies that $X_{i_1,j_1}X_{i_2,j_2} \in E((GH)_{\mathcal{GH}})$ if and only if $i_1 \neq i_2$ and either there are all possible edges between $A_{i_1,j_1}$ and $A_{i_2,j_2}$ and no edge between $B_{i_1,j_1}$ and $B_{i_2,j_2}$ or vice versa. Since $G_J \cong \overline{H}_J$, it follows that either $A_{i_1,j_1}A_{i_2,j_2} \in E(G_J)$ and $B_{i_1,j_1}B_{i_2,j_2} \notin E(H_J)$ or $A_{i_1,j_1}A_{i_2,j_2} \notin E(G_J)$ and $B_{i_1,j_1}B_{i_2,j_2} \in E(H_J)$. Both cases implies that $X_{i_1,j_1}X_{i_2,j_2} \in E((GH)_{\mathcal{GH}})$ if and only if $i_1 \neq i_2$. Hence $(GH)_{\mathcal{GH}}$ is a complete $k$-partite graph.

For the converse assume that $(GH)_{\mathcal{GH}}$ is a complete $k$-partite graph with $V((GH)_{\mathcal{GH}}) =\bigcup_{i\in [k]}\{X_{i,1},\ldots , X_{i,n_i}\}$ where $X_{i_1,j_1}$ is adjacent to $X_{i_2,j_2}$ if and only if $i_1\neq i_2$. For the purpose of contradiction assume that $J_1,\ldots , J_k$ are not compatible. Thus the set $J=J_1\cup \ldots \cup J_k$ is not an independent set of $G \diamond H$. Since for any $i \in [k]$, $J_i$ is an independent set of $G_i \diamond H_i \subseteq G \diamond H$, it follows that there exist $i_1,i_2 \in [k]$, $i_1 \neq i_2$, such that $(g_1,h_1) \in J_{i_1}$ and $(g_2,h_2) \in J_{i_2}$ are adjacent in $G \diamond H$. Since $(g_1,h_1) \in J_{i_1} \subseteq V(G_{i_1} \diamond H_{i_1})$, $(g_2,h_2) \in J_{i_2} \subseteq V(G_{i_2} \diamond H_{i_2})$ and since $V(G_{i_1}\diamond H_{i_1}) \cap V(G_{i_2} \diamond H_{i_2})=\emptyset$, it follows that $g_1\neq g_2$ and $h_1 \neq h_2$. Hence adjacency of $(g_1,h_1)$ and $(g_2,h_2)$ in $G \diamond H$ implies that either $g_1g_2 \in E(G)$ and $h_1h_2\in E(H)$ or $g_1g_2 \notin E(G)$ and $h_1h_2 \notin E(H)$. For $\ell \in [2]$, let $V_{i_\ell,j_\ell}$ be a $G$-representation of $X_{i_\ell,j_\ell}$ that contains $g_\ell$, and let  $U_{i_\ell,j_\ell}$ be an $H$-representation of $X_{i_\ell,j_\ell}$ that contains $h_\ell$. Since either $g_1g_2 \in E(G), h_1h_2 \in E(H)$ or  $g_1g_2 \notin E(G), h_1h_2 \notin E(H)$ the definition of the edge set of the partition graph implies that $X_{i_1,j_1}X_{i_2,j_2} \notin E((GH)_{\mathcal{GH}})$. Now $i_1 \neq i_2$ yields is a contradiction, because two vertices $X_{i_1,j_1},X_{i_2,j_2}$ from $V((GH)_{\mathcal{GH}})$ are adjacent if and only if $i_1 \neq i_2$.
\end{proof}

Theorem \ref{compatible} together with Theorem \ref{friends} guaranties us that every independent set of a modular product is build by some blocks of type A, B, C and D (there can be more of the same type). While it is easy to build graphs under the conditions of Theorem \ref{compatible} that have independent set with many compatible blocks, it seems quite difficult to describe maximum independent set of $G\diamond H$ when the factors are given. Nevertheless, this task may be more manageable if we restrict to certain graph classes. Graphs of girth at least five seems to be such an example. (Recall that the girth $g(G)$ of a graph $G$ is the length of a shortest cycle.) The reason is that only multipartite subgraphs in a graph $G$ with $g(G)\geq 5$ are stars $K_{k,1}$ and one-partite graph $N_k$. Hence type A  immediately reduce from $K_{p_1,\dots,p_t}\diamond N_t$ to  $N_{p_1}\diamond K_1$ where $K_1$ is a vertex $h\in V(H)$ (and symmetrically also type B). Moreover, compatibility of independent sets in $G\diamond H$ often yields cycles $C_3$ and $C_4$, which is not possible since $g(G)\geq 5$. So, we end this discussion with the following open problem.

\begin{problem}
Determine $\alpha(G\diamond H)$ if $g(G)\geq 5$ and $g(H)\geq 5$.
\end{problem}

%%%%%%%%%%%%%%%%%%%%%%%%%%%%%%%%%%%%%%%%%%%%%%%%%%%%%%%%%%%%%%%%%%%%%%%%%%%%%%%%%%%%%%%%%%%%%%%%%%%%%%%%%%%%%%%%%%%%%%%%%%%%%%%%%%%%%%%%%%%%%%%%%%%%%%%%
%%%%%%%%%%%%%%%%%%%%%%%%%%%%%%%%%%%%%%%%%%%%%%%%%%%%%%%%%%%%%%%%%%%%%%%%%%%%%%%%%%%%%%%%%%%%%%%%%%%%%%%%%%%%%%%%%%%%%%%%%%%%%%%%%%%%%%%%%%%%%%%%%%%%%%

\section*{Acknowledgements}
This work has been supported by the European Commission's Horizon Europe Research
and Innovation programme through the Marie Skłodowska-Curie Actions Staff Exchanges
(MSCA-SE) under Grant Agreement no.101182819 (COVER: (C)ombinatorial (O)ptimization for
(V)ersatile Applications to (E)merging u(R)ban Problems)
The authors also acknowledge the financial support of the Slovenian Research and Innovation Agency (research core funding No.\ P1-0297 and project N1-0431).

%%%%%%%%%%%%%%%%%%%%%%%%%%%%%%%%%%%%%%%%%%%%%%%%%%%%%%%%%%%%%%%%%%%%%%%

\end{document}